\title{\textbf{Generalized Identities of Certain Continued Fractions}}
\author{Shaoxiong (Steven) Yuan}
\date{June 2019}
\newtheorem{theorem}{Theorem}
\newtheorem{corollary}{Corollary}
\begin{document}

\maketitle

\section{Abstract}

\indent In this article, we will discover some new generalized identity regarding continued fractions. We will connect the results to Fibonacci numbers and Lucas numbers. For all the proofs, we will use induction.

\section{Introduction}

\indent We begin with a study of some basic facts about the continued fractions and Fibonacci sequence.

Consider this expression:
$$
a_0 + \dfrac{1}{a_1+ \dfrac{1}{a_2+\dfrac{1}{\ddots+\dfrac{1}{a_n}}}}
$$

Let's define this form of expression (with integers $a_0$, $a_1$, $a_2$, ... , $a_n$ and 1 at every numerator) to be a regular or simple continued fraction (abbr. continued fraction). For succinctness, we simplify this complicated expression into the following notation: [$a_0$, $a_1$, ... , $a_n$].

For example,
$$
[2,3,7] = 2 + \dfrac{1}{3 + \dfrac{1}{7}} = \dfrac{51}{22}
$$
and
$$
[1,5,6,8] = 1 + \dfrac{1}{5 + \dfrac{1}{6 + \dfrac{1}{8}}} = \dfrac{302}{253}.
$$

Continued fractions can be divided into two cases: finite continued fractions (i.e. consists of finite amount of terms) and infinite continued fractions (i.e. consists of infinite amount of terms: $[a_0, a_1, ...]$). There are some nice identities about infinite continued fractions. For example,
$$
\sqrt{19} = [4,2,1,3,1,2,8,2,1,3,1,2,8,...] \ \textrm{(a period of 6)}
$$
$$
e = [2,1,2,1,1,4,1,1, ...] \ \textrm{(a period of 3 and add 2 to one term of each cycle)}
$$

However, this article will mainly discuss the identities and generalizations of finite continued fractions.

Let's define $f_n$ to be the number of different ways to tile a board of length $n$ with
squares (of width $1$) and dominoes (of width $2$). Thus, $f_4 = 5$, because there are five ways
to tile a board of length $4$, as shown here (we can also use $1$'s and $2$'s to represent the  squares and
the dominoes):

\setlength{\unitlength}{1.5pt}
\begin{picture}(100,90)
\multiput(11,11)(10,0){4}{\framebox(9,9){1}}
\put(11,26){\framebox(9,9){1}}
  \put(21,26){\framebox(9,9){1}}
  \put(31,26){\framebox(19,9){2}}
\put(11,41){\framebox(9,9){1}}
  \put(21,41){\framebox(19,9){2}}
  \put(41,41){\framebox(9,9){1}}
\put(11,56){\framebox(19,9){2}}
  \put(31,56){\framebox(9,9){1}}
  \put(41,56){\framebox(9,9){1}}
\put(11,71){\framebox(19,9){2}}
  \put(31,71){\framebox(19,9){2}}
\put(80,56){There are five ways to tile a strip}
\put(80,46){of length 4 with squares and dominoes.}
\put(80,30){Hence, $f_4 = 5$.}
\end{picture}

With a bit of effort, one is able to find these values for $f_n$:

\begin{tabular}{c|cccccccccccc}
\multicolumn{10}{c}{\rule[-5mm]{0mm}{12mm} \bf The first few values for $f_n$}\\
$n$ & 0 &1&2&3&4&5&6&7 &8 &9 &10 &11  \\
\hline
$f_n$ & 1 & 1 & 2 & 3 & 5 & 8 & 13 & 21 & 34 & 55 & 89 & 144 \\
\end{tabular}

\vskip0.3in

(One might wonder why we write $f_0$ as being equal to 1. This is because there is exactly one
way to tile a board of length 0, and that one way is {\em to use exactly 0 squares and 0 dominoes}.)
A quick glance at the above chart will lead to the discovery that each Fibonacci number is the sum of the two 
previous Fibonacci numbers. In other words, for $n \geq 2$,
$$
f_n = f_{n-1} + f_{n-2}.
$$

There's a delightful and well-known demonstration of this fact in the lovely book ``Proofs That Really Count" by Arthur T. Benjamin and Jennifer J. Quinn \cite{BQ}, 
which is full of many beautiful ways to prove mathematical formulas using visual proofs. 

In the book, the authors proved with visual tiling that if $[a_0, a_1, ... , a_n] = \dfrac{p_n}{q_n}$, then, for $n \geq 0$, $p_n$ is the ways to tile an $(n+1)$ tiling with height conditions $a_0, a_1, ... , a_n$, and $q_n$ is the ways to tile an $n$ tiling with height conditions $a_1, a_2, ... , a_n$ (All the tiles are covered by squares and dominoes, but only squares are stackable). Then immediately, by considering the condition of the last tile (square or domino), we have the following identity:
$$
p_n = a_np_{n-1} + p_{n-2}
$$
$$
q_n = a_nq_{n-1} + q_{n-2}
$$

Note that if we plug in $a_n = 1$ into both equation (i.e. $[1,1,1,...]$), we get
$$
p_n = p_{n-1} + p_{n-2}
$$
$$
q_n = q_{n-1} + q_{n-2}
$$

This is exactly the definition of Fibonacci numbers. 
$$
[1] = 1
$$
$$
[1,1] = \frac{2}{1}
$$
$$
[1,1,1] = \frac{3}{2}
$$
$$
[1,1,1,1] = \frac{5}{3}
$$
$$
[1,1,1,1,1] = \frac{8}{5}
$$

By writing out the first few terms, we can clearly see that this theorem holds true. From this example, we can see there are intimate connections between continued fractions and Fibonacci sequence. Inspired by this, we spend the whole summer in finding more delicate relationships in finite continued fraction and Fibonacci numbers. In the following discussion, we will cite some identities from the book for granted.

\section{Known Properties}

\indent There are several known properties of continued fractions in connection with Fibonacci sequence that lay the foundation for further research and discovery.

\textbf{Identity 1
(Identity 117 from ``Proofs that Really Count"):}

For $n \geq 0$,
$$
[4,4, ... ,4,3] = \frac{f_{3n+3}}{f_{3n}},
$$
where $a_n = 3$, and $a_i = 4$ for all $0 \leq i < n$.

We will prove this by induction.
\begin{proof}
\textbf{Step 1: Base Cases.}

$n = 0$: $[3] = \dfrac{3}{1} = \dfrac{f_3}{f_0} = \dfrac{f_{3\cdot0 + 3}}{f_{3\cdot0}}$

$n = 1$: $[4,3] = 4 + \dfrac{1}{3} = \dfrac{13}{3} = \dfrac{f_6}{f_3} = \dfrac{f_{3\cdot1 + 3}}{f_{3\cdot1}}$

$n = 2$: $[4,4,3] = 4 + \dfrac{1}{4 + \dfrac{1}{3}} = \dfrac{55}{13} = \dfrac{f_9}{f_6} = \dfrac{f_{3\cdot2 + 3}}{f_{3\cdot2}}$

True for $n = 0,1,2$

\textbf{Step 2: Inductive Step.}

Assuming that this identity is true for some ``$n$", we try to prove this formula is true for ``$n + 1$''.

Assumption: 
\[
[4,4, ... ,4,3] \ =\  4 + \dfrac{1}{4 + \dfrac{1}{\ddots +  \dfrac{1}{4+\dfrac{1}{3}}}} \ = \ \dfrac{f_{3n+3}}{f_{3n}},
\]
with $n$ 4's and one 3.

Then, we take the reciprocal of both sides. The equation becomes:
$$
 \dfrac{1}{4+\dfrac{1}{4+\dfrac{1}{\ddots+\dfrac{1}{4+\dfrac{1}{3}}}}}  = \dfrac{f_{3n}}{f_{3n+3}}
$$

Next, we add 4 to both sides and get:
$$
[4,4, ... ,4,4,3] \ \  = \ \  4 + \dfrac{1}{4+\dfrac{1}{4+\dfrac{1}{\ddots+\dfrac{1}{4+\dfrac{1}{3}}}}} \ \ = \ \ \dfrac{4f_{3n+3} + f_{3n}}{f_{3n+3}},$$
with $(n+1)$ 4's and one 3.

From Identity 18 in the book, we know
$$
4f_n = f_{n+2} + f_n + f_{n-2}.
$$ 

If we substitute $n$ with $3n+3$, we get
$$
4f_{3n+3} + f_{3n} = f_{3n+5} + f_{3n+3} + f_{3n+1} + f_{3n} = f_{3n+6}
$$ 
because of the definition of Fibonacci numbers.

This concludes our proof. 
\end{proof}

\textbf{Identity 2 (Identity 118 from the book):}

For $n \geq 0$,
$$
[4,4, ... ,4,5] = \frac{f_{3n+4}}{f_{3n+1}},  
$$
where $a_n = 5$, and $a_i = 4$ for all $0 \leq i < n$.

\begin{proof}
Proof by induction.

The proof is essentially the same as Identity 1 except that we need to substitute $n$ with $3n+4$ in order to get $3(n+1)+4 = 3n+7$.
\end{proof}

Before we move on to the next identity, we need to define a sequence first. Let's define $L_n$ be the sequence starting with $L_0 = 2$, $L_1 = 1$, $L_2 = 3$, and $L_n = L_{n-1} + L_{n-2}$. This sequence is actually called Lucas numbers. In the book ``Proofs that Really Count" \cite{BQ}, there exists a nice visual proof of Lucas numbers. Different from Fibonacci numbers, $L_n$ is the number of different ways to tile a bracelet of ``length" $n$ with
``squares" (of ``width" $1$) and ``dominoes" (of ``width" $2$). The first few values of $L_n$ are shown in the table below.

\begin{tabular}{c|cccccccccccc}
\multicolumn{10}{c}{\rule[-5mm]{0mm}{12mm} \bf The first few values for $L_n$}\\
$n$ & 0 &1&2&3&4&5&6&7 &8 &9 &10 &11  \\
\hline
$L_n$ & 2 & 1 & 3 & 4 & 7 & 11 & 18 & 29 & 47 & 76 & 123 & 199 \\
\end{tabular}

(Again, the reason why we write $L_0$ as 2 is because there are 2 ways to tile a bracelet of length 0, and those are: {\em use 0 squares and 0 dominoes in phase and out of phase}.)

\textbf{Identity 3:}

For $n \geq 0$,
$$
[4,4, ... ,4,7] = \frac{L_{3n+5}}{L_{3n+2}},
$$
where $a_n = 7$, and $a_i = 4$ for all $0 \leq i < n$.

\begin{proof}
Proof by induction.

\textbf{Step 1: Base Cases.}

$n = 0$: $[7] = \dfrac{7}{1} = \dfrac{L_5}{L_2} = \dfrac{L_{3\cdot0+5}}{L_{3\cdot0+2}}$

$n = 1$: $[4,7] = \dfrac{29}{7} = \dfrac{L_8}{L_5} = \dfrac{L_{3\cdot1+5}}{L_{3\cdot1+2}}$

$n = 2$: $[4,4,7] = \dfrac{123}{29} = \dfrac{L_{11}}{L_8} = \dfrac{L_{3\cdot2+5}}{L_{3\cdot2+2}}$

After verification, all base cases are true.

\textbf{Step 2: Inductive Step.}

Assuming that this theorem is true for some ``$n$", we want to extend it to the case ``$n+1$".

As what we've done in the proof of Identity 1, we take the reciprocal of the equation and add 4 to both sides. Finally, the equation becomes:
$$
[4,4, ... ,4,4,7] = \frac{4L_{3n+5} + L_{3n+2}}{L_{3n+5}}, \ \textrm{with $(n+1)$ 4's and one 7.}
$$

Suffice to show that $4L_{3n+5} + L_{3n+2} = L_{3(n+1) + 5} = L_{3n+8}$.

From Identity 32 in the book, 
$$
L_{3n+5} = f_{3n+5} + f_{3n+3}
$$
and
$$
L_{3n+2} = f_{3n+2} + f_{3n}.
$$

From Identity 18 again, 
$$
4f_{3n+5} = f_{3n+7} + f_{3n+5} + f_{3n+3}
$$
and
$$
4f_{3n+3} = f_{3n+5} + f_{3n+3} + f_{3n+1}.
$$

After converting them into Fibonacci numbers, the equation becomes
$$
f_{3n+7} + f_{3n+5} + f_{3n+3} + f_{3n+5} + f_{3n+3} + f_{3n+1} + f_{3n+2} + f_{3n}.
$$

Combining $f_{3n+7} + f_{3n+5} + f_{3n+3} + f_{3n+1} + f_{3n}$ and $f_{3n+2} + f_{3n+3} + f_{3n+5}$, we finally get $f_{3n+8} + f_{3n+6}$, which is equal to $L_{3n+8}$.

Thus, this completes the proof.
\end{proof}

\section{New Theorems}

\indent First, let's state the 3 known properties:
$$
\textrm{For $n \geq 0$}, \ [4,4, ... ,4,3] = \frac{f_{3n+3}}{f_{3n}}, \ \textrm{where $a_n = 3$, and $a_i = 4$ for all $0 \leq i < n$.}
$$
$$
\textrm{For $n \geq 0$}, \ [4,4, ... ,4,5] = \frac{f_{3n+4}}{f_{3n+1}}, \ \textrm{where $a_n = 5$, and $a_i = 4$ for all $0 \leq i < n$.}
$$
$$
\textrm{For $n \geq 0$}, \ [4,4, ... ,4,7] = \frac{L_{3n+5}}{L_{3n+2}}, \ \textrm{where $a_n = 7$, and $a_i = 4$ for all $0 \leq i < n$.}
$$

From the identities listed and proved above, we can observe a clear pattern. When the last number $j\in \mathbb{Z}$ equals 3 and 5, the resulting fraction is in the form $\dfrac{\textrm{Fibonacci}}{\textrm{Fibonacci}}$. When $j = 7$, the result will be $\dfrac{\textrm{Lucas}}{\textrm{Lucas}}$. Additionally, the index number of the numerator is always 3 more than that of the denominator ($3n+3$ and $3n$, $3n+4$ and $3n+1$, $3n+5$ and $3n+1$).

Then, we start to wonder what the result is when $j = 9$, since 9 is neither in Fibonacci numbers nor Lucas numbers.

Considering that Fibonacci sequence and Lucas sequence start with 1,1 and 2,1, respectively, we created a new sequence starting with 3,1. In fact, this sequence is the sum of the $(n-1)$th term of Fibonacci numbers and the $n$th term of Lucas numbers (i.e. $f_{n-1} + L_n$ for all $n\geq 0$) and is listed as 
\href{https://oeis.org/A104449}{A104449} 
in the Online Encyclopedia of Integer Sequences (OEIS).

\begin{tabular}{c|cccccccccccc}
\multicolumn{10}{c}{\rule[-5mm]{0mm}{12mm} \bf The first few values for $f_{n-1} + L_n$}\\
$n$ & 0 &1&2&3&4&5&6&7 &8 &9 &10 &11  \\
\hline
$f_{n-1} + L_n$ & 3 & 1 & 4 & 5 & 9 & 14 & 23 & 37 & 60 & 97 & 157 & 254 \\
\end{tabular}

Let's try the first few values to see if this proposition is true.
$$
n = 0: [9] = \frac{9}{1}
$$
$$
n = 1: [4,9] = \frac{37}{9}
$$
$$
n = 2: [4,4,9] = \frac{157}{37}
$$

Indeed, those results testified that our proposition is true. Curious readers may try out a few more values or let $j = 11,13...$ with new sequences starting with 4,1 and 5,1, correspondingly.

Define $G_n$ to be a linear recurrence sequence with $G_0^{(k)} = k$, $G_1^{(k)} = 1$, and $G_n^{(k)} = \textrm{the sum of the previous two terms}$ (i.e. $G_n^{(k)} = G_{n-1}^{(k)} + G_{n-2}^{(k)}$).

\begin{tabular}{c|cccccccccccc}
\multicolumn{10}{c}{\rule[-5mm]{0mm}{12mm} \bf The first few values for $G_n^{k}$}\\
$n$ & 0 &1&2&3&4&5&6&7 &8 &9 &10 &11  \\
\hline
$G_n^3$ & 3 & 1 & 4 & 5 & 9 & 14 & 23 & 37 & 60 & 97 & 157 & 254 \\
\hline
$G_n^4$ & 4 & 1 & 5 & 6 & 11 & 17 & 28 & 45 & 73 & 118 & 191 & 309 \\
\hline
$G_n^5$ & 5 & 1 & 6 & 7 & 13 & 20 & 33 & 53 & 86 & 139 & 225 & 364 \\
\hline
$G_n^6$ & 6 & 1 & 7 & 8 & 15 & 23 & 38 & 61 & 99 & 160 & 259 & 419 \\
\end{tabular}

With a bit of generalization and refinement, here is our new theorem:
\begin{theorem}\label{a}
For all $k\geq 0$ and $k\in \mathbb{Z}$, $G_0 = k$, $G_1 = 1$, and $G_n = G_{n-1} + G_{n-2}$. For all $m\geq 0 \in \mathbb{Z}$, $a_m = 2k+3$ and $a_i = 4$ for all $0\leq i < m$,
$$
[4,4, ... ,4,2k+3] = \frac{G_{3m+4}}{G_{3m+1}}.
$$
\end{theorem}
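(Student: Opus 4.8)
The plan is to prove the statement by induction on $m$, following the same template used for Identities 1, 2, and 3, but isolating the single algebraic fact that makes all three special cases work and showing that it depends only on the recurrence $G_n = G_{n-1} + G_{n-2}$, never on the initial values. This is the key realization: the earlier proofs invoked Identity 18 (and Identity 32) from the book, which are statements about Fibonacci and Lucas numbers specifically, but the identity actually needed is a purely structural consequence of the three-term recurrence, so it transfers verbatim to $G_n$ for every $k$. For the base case $m = 0$, I would simply compute $G_4$ directly from the recurrence: $G_0 = k$, $G_1 = 1$, $G_2 = k+1$, $G_3 = k+2$, $G_4 = 2k+3$. Hence $\frac{G_4}{G_1} = \frac{2k+3}{1} = [2k+3]$, so the formula holds at $m = 0$.

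For the inductive step, I would assume $[4,4,\ldots,4,2k+3] = \frac{G_{3m+4}}{G_{3m+1}}$ with $m$ fours, then take the reciprocal and add $4$ to prepend one more $4$, exactly as before, obtaining
$$
[4,4,\ldots,4,2k+3] = 4 + \frac{G_{3m+1}}{G_{3m+4}} = \frac{4G_{3m+4} + G_{3m+1}}{G_{3m+4}}
$$
now with $(m+1)$ fours. Since $3(m+1)+1 = 3m+4$, the denominator is already correct, so it remains only to show that the new numerator equals $G_{3(m+1)+4} = G_{3m+7}$, that is,
$$
4G_{3m+4} + G_{3m+1} = G_{3m+7}.
$$

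The heart of the argument is therefore the lemma that $G_{n+3} = 4G_n + G_{n-3}$ for every $n \geq 3$ and every sequence obeying $G_n = G_{n-1} + G_{n-2}$. I would prove this directly from the recurrence: unfolding gives $G_{n+3} = 2G_{n+1} + G_n = 3G_n + 2G_{n-1}$, while $2G_{n-1} = G_n + G_{n-3}$ follows from $G_{n-1} = G_{n-2} + G_{n-3}$ combined with $G_{n-2} = G_n - G_{n-1}$; putting these together yields $G_{n+3} = 4G_n + G_{n-3}$. Applying this with $n = 3m+4$ (so that $n-3 = 3m+1$ and $n+3 = 3m+7$, and $n \geq 4 > 3$ for all $m \geq 0$) closes the induction. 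The main obstacle is not any single calculation, since each is only a line or two, but rather the conceptual step of replacing the book's Fibonacci- and Lucas-specific identities by this one initial-condition-free recurrence identity; once that substitution is recognized, the three earlier theorems collapse into the cases $k = 0, 1, 2$ of a single uniform argument.
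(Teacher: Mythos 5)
Your proof is correct, and it takes a genuinely different route from the paper's. The paper never argues Theorem~\ref{a} directly on the sequence $G_n$: instead it observes that $G_n^{(k)} = f_n + k\cdot f_{n-1}$ satisfies the Gibonacci recurrence, restates the claim entirely in Fibonacci form (Theorem~\ref{b}), and closes that induction by invoking Identity 18 from \cite{BQ} ($4f_n = f_{n+2}+f_n+f_{n-2}$) separately on the unweighted and the $k$-weighted parts of the numerator. You keep $G_n$ intact and isolate the initial-condition-free lemma $G_{n+3} = 4G_n + G_{n-3}$ (for $n\geq 3$), proved in three lines from the recurrence alone; your outer induction (reciprocal, add 4, match the numerator) is the same skeleton the paper uses everywhere, and your index bookkeeping ($n = 3m+4 \geq 4$) is sound. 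In effect, your lemma is exactly the combination $4f_n + f_{n-3} = f_{n+3}$ that the paper extracts from Identity 18, but liberated from the Fibonacci initial conditions, which is precisely why it applies to every $k$ at once. What your route buys: it is self-contained, it makes transparent that the phenomenon depends only on the three-term recurrence and not on starting values, and it subsumes Identities 1--3 (the cases $k=0,1,2$) and Theorem~\ref{a} in one uniform argument. What the paper's route buys: the explicit decomposition $G_n = F_n + k\cdot F_{n-1}$ is itself the content of Theorem~\ref{b} and of the later corollary extending $k$ to negative integers, so the detour through Fibonacci numbers is not wasted there. Both arguments are valid; yours is arguably the cleaner proof of the statement as given.
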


When $k = 0,1$, the $G_n$ sequence is Fibonacci numbers in disguise ($G_n^0 = F_n$ and $G_n^1 = F_{n+1}$ if $F_0 = 0$, $F_1 = 1$ and $F_n$ is the sum of two previous terms). In the same way, when $k = 2$, $G_n$ is Lucas numbers in disguise ($G_n^2 = L_n$ if $L_0 = 2$, $L_1 = 1$, and $L_n$ equals the sum of two previous terms). When $k = 3$, the continued fraction becomes $[4,4, ... ,4,9]$. After plugging in $m = 1,2,3$, the result is indeed $\dfrac{G_7}{G_4}, \dfrac{G_{10}}{G_7}, \dfrac{G_{13}}{G_{10}}$.

\begin{corollary}
Regardless of what value of $k$ is, the "Gibonomials" of the numerator and denominator, respectively, will have the same index for all $m$. 
\end{corollary}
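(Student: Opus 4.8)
The plan is to prove Theorem~\ref{a} by induction on $m$, mirroring exactly the three inductive arguments used for Identities 1--3, and then to read off the Corollary directly from the index formula. The one conceptual gain I am after is to replace the book's Identity 18 (used for the Fibonacci cases) and Identity 32 (used for the Lucas case) by a \emph{single} identity that holds for every sequence satisfying $G_n = G_{n-1} + G_{n-2}$, so that all values of $k$ are handled simultaneously rather than case by case.

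For the base case $m = 0$, I would unwind the recurrence from $G_0 = k$ and $G_1 = 1$ to obtain $G_2 = k+1$, $G_3 = k+2$, and $G_4 = 2k+3$, so that
$$
[2k+3] = \frac{2k+3}{1} = \frac{G_4}{G_1} = \frac{G_{3\cdot 0 + 4}}{G_{3\cdot 0 + 1}},
$$
which establishes the claim at $m = 0$ for all $k$ at once. For the inductive step I would assume $[4,\dots,4,2k+3] = G_{3m+4}/G_{3m+1}$ with $m$ fours, take reciprocals, and add $4$ to both sides (prepending one more $4$), giving
$$
[4,\dots,4,2k+3] = 4 + \frac{G_{3m+1}}{G_{3m+4}} = \frac{4G_{3m+4} + G_{3m+1}}{G_{3m+4}}
$$
with $m+1$ fours. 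Since $G_{3m+4} = G_{3(m+1)+1}$, the whole step reduces to the arithmetic claim that $4G_{3m+4} + G_{3m+1} = G_{3m+7} = G_{3(m+1)+4}$.

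The heart of the matter, and the step I expect to be the main obstacle, is therefore to establish the $k$-independent identity
$$
G_{n+3} = 4G_n + G_{n-3},
$$
which yields exactly the needed claim upon setting $n = 3m+4$. I would prove it using only the recurrence: expanding $G_{n+3} = 2G_{n+1} + G_n = 3G_n + 2G_{n-1}$, then writing $2G_{n-1} = 2G_{n-2} + 2G_{n-3}$ and substituting $2G_{n-2} = G_n - G_{n-3}$ (which is just $G_n = 2G_{n-2} + G_{n-3}$ rearranged) to collapse everything to $4G_n + G_{n-3}$. Because this derivation never touches the initial values $G_0, G_1$, it holds verbatim for Fibonacci ($k = 0,1$), Lucas ($k = 2$), and the new sequences $k = 3,4,\dots$, which is precisely the insight that unifies the three known identities. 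The genuine difficulty is thus conceptual rather than computational: recognizing that one identity, independent of the seed, subsumes the three case-specific book identities. (Alternatively one could invoke the standard relation $x_{n+j} = L_j x_n - (-1)^j x_{n-j}$ with $j = 3$ and $L_3 = 4$, but deriving $G_{n+3} = 4G_n + G_{n-3}$ from scratch keeps the argument self-contained.)

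Finally, the Corollary is immediate once the Theorem is in hand: the value of $[4,\dots,4,2k+3]$ is $G_{3m+4}/G_{3m+1}$, and the two indices $3m+4$ and $3m+1$ are functions of $m$ alone, with no dependence on $k$. Hence, as $k$ varies, only the underlying Gibonacci sequence $G^{(k)}$ changes, while the numerator and denominator indices remain fixed at $3m+4$ and $3m+1$ respectively, which is exactly the assertion of the Corollary.
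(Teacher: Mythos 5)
Your proposal is correct, and it reaches the Corollary the same way the paper implicitly does: once the formula $[4,\dots,4,2k+3] = G_{3m+4}/G_{3m+1}$ is established, the Corollary is immediate because the indices $3m+4$ and $3m+1$ are functions of $m$ alone (the paper offers only a numerical illustration at this point, treating it as self-evident). Where you genuinely diverge is in how the underlying theorem is proved. The paper never argues directly with the $G$-sequence: it passes to the decomposition $G_n^{(k)} = F_n + k\cdot F_{n-1}$, restates the result as Theorem~\ref{b} in purely Fibonacci terms, and runs the induction using the book's Identity 18 ($4f_n = f_{n+2} + f_n + f_{n-2}$) separately on the $F_{3m+4}$-part and the $k\cdot F_{3m+3}$-part; the earlier Lucas case had similarly required Identity 32. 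You instead prove the single seed-independent identity $G_{n+3} = 4G_n + G_{n-3}$ directly from the recurrence, and your derivation is sound (the substitution $2G_{n-2} = G_n - G_{n-3}$ closes it cleanly, and at $n = 3m+4$ all indices stay nonnegative). Each route buys something: the paper's Fibonacci decomposition produces the explicit closed form of Theorem~\ref{b} and stays anchored to the combinatorial tiling identities it cites, while your lemma unifies the Fibonacci, Lucas, and general-$k$ cases in one stroke and makes the content of the Corollary conceptually transparent --- the identity driving the induction never consults $G_0$ or $G_1$, so nothing about the index pattern can depend on $k$.
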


For example, when $m = 2$, regardless of the value of $k$ (3, 4, 5, 6 from the chart above) is, $\dfrac{G_{3m+4}}{G_{3m+1}}$ will always equal $\dfrac{G_{10}}{G_7}$. When $m = 3$, the result will always be $\dfrac{G_{13}}{G_{10}}$.

Notice the following property. Any $G_n^{(k)}$ is constructed in a way that $G_n^{(k)} = f_n + k\cdot f_{n-1}$. Next, let's prove that this property satisfies the definition of the linear recurrence sequence: $G_n^{(k)} = G_{n-1}^{(k)} + G_{n-2}^{(k)}$.
$$
G_n^{(k)} = f_n + k\cdot f_{n-1}
$$
$$
G_{n-1}^{(k)} = f_{n-1} + k\cdot f_{n-2}
$$
$$
G_{n-2}^{(k)} = f_{n-2} + k\cdot f_{n-3}
$$

After inspection, we find out $G_n^{(k)}$ is indeed equal to $G_{n-1}^{(k)} + G_{n-2}^{(k)}$.

Based on this property, we can now present an even more concise and generalized theorem by avoiding using the new sequence $G_n$.

\begin{theorem}\label{b}
$F_n$ is the Fibonacci sequence defined with $F_0 = 0$, $F_1 = 1$, and of course $F_n = \textrm{sum of two previous terms}$. For all $k \geq 0 \in \mathbb{Z}$, $a_m = 2k+3$, $a_i = 4$ for all $0 \leq i < m$,
$$
[4,4, ... , 4,2k+3] = \frac{F_{3m+4} + k\cdot F_{3m+3}}{F_{3m+1} + k\cdot F_{3m}}.
$$
\end{theorem}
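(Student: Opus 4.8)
The plan is to derive Theorem \ref{b} as an immediate consequence of Theorem \ref{a} together with a closed-form expression for the sequence $G_n$ in terms of the standard Fibonacci numbers $F_n$. The key observation is that the difference between the two theorems is purely cosmetic: Theorem \ref{a} packages the answer using the auxiliary sequence $G_n$, while Theorem \ref{b} unpacks $G_n$ into a combination of $F_n$'s. So the entire task reduces to expressing $G_n$ explicitly and substituting.

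First I would establish that $G_n = F_n + k\,F_{n-1}$ for all $n \geq 0$, which is the capital-$F$ analogue of the relation noted just above the theorem. The cleanest argument is by uniqueness: the sequence $H_n := F_n + k\,F_{n-1}$ satisfies the same linear recurrence as $G_n$, since $H_{n-1} + H_{n-2} = (F_{n-1}+F_{n-2}) + k(F_{n-2}+F_{n-3}) = F_n + k\,F_{n-1} = H_n$. It then remains only to match the two initial values. Using the standard extension $F_{-1} = 1$, we get $H_0 = F_0 + k\,F_{-1} = 0 + k = k = G_0$ and $H_1 = F_1 + k\,F_0 = 1 + 0 = 1 = G_1$. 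Since a second-order linear recurrence is determined by its first two terms, $H_n = G_n$ for every $n$.

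With the closed form in hand, the theorem follows by direct substitution. Theorem \ref{a} gives $[4,4,\ldots,4,2k+3] = \frac{G_{3m+4}}{G_{3m+1}}$; replacing $G_{3m+4}$ by $F_{3m+4} + k\,F_{3m+3}$ and $G_{3m+1}$ by $F_{3m+1} + k\,F_{3m}$ yields exactly the stated expression, and there is nothing left to prove.

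The one place that requires care --- and the only real obstacle --- is the change of Fibonacci convention between sections. The earlier material uses the tiling numbers $f_n$ with $f_0 = 1$, whereas this theorem switches to $F_n$ with $F_0 = 0$, so that $f_n = F_{n+1}$; I would keep this shift explicit throughout and in particular confirm $F_{-1} = 1$ for the base case. As a fully self-contained alternative that avoids leaning on Theorem \ref{a}, one could instead run a direct induction on $m$ mirroring the proofs of Identities 1--3: take reciprocals, add $4$, and reduce the inductive step to the single identity $4(F_{3m+4}+k\,F_{3m+3}) + (F_{3m+1}+k\,F_{3m}) = F_{3m+7}+k\,F_{3m+6}$, which splits into two applications of Identity 18 (one on the plain-$F$ terms, one on the $k$-scaled terms). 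This route involves more bookkeeping but relies on nothing beyond the known properties.
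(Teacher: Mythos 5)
Your primary route has a logical problem in the context of this paper: Theorem \ref{a} is never actually proved there. It is stated, illustrated with the special cases $k = 0,1,2,3$, and then the paper moves on; the only genuine proof in this part of the paper is the induction given for Theorem \ref{b} itself. In the paper's logical order, Theorem \ref{b} is what gets proved, and Theorem \ref{a} is in effect the repackaged consequence, via the closed form $G_n^{(k)} = F_n + k\,F_{n-1}$ (the paper records exactly this relation, in its lowercase-$f$ convention, just before stating Theorem \ref{b}). So deriving \ref{b} from \ref{a} inverts the paper's dependency and, as written, rests on an unproven statement; to make this route stand on its own you would first have to prove Theorem \ref{a} by the very same reciprocal-and-add-$4$ induction, so nothing is actually saved. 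The reduction itself is mathematically sound: your uniqueness argument for $H_n = F_n + k\,F_{n-1}$ (with the standard extension $F_{-1} = 1$) is correct, and substituting into $G_{3m+4}/G_{3m+1}$ does produce the stated formula. The defect is not the algebra but where the proof burden lands.

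Fortunately, your closing ``self-contained alternative'' is precisely the paper's proof: the base case $m = 0$ gives $[2k+3] = (F_4 + kF_3)/(F_1 + kF_0) = 2k+3$, and the inductive step takes reciprocals, adds $4$, and reduces to the single identity $4(F_{3m+4}+k\,F_{3m+3}) + (F_{3m+1}+k\,F_{3m}) = F_{3m+7}+k\,F_{3m+6}$, which splits into the two instances $F_{3m+1}+4F_{3m+4} = F_{3m+7}$ and $F_{3m}+4F_{3m+3} = F_{3m+6}$ of Identity 18. (Your version of this split is in fact the corrected one; the paper's text misprints the first instance as $F_{3m+1}+4F_{3m+5} = F_{3m+7}$.) Promote that alternative from a remark to the actual proof and you coincide with the paper; retain the Theorem-\ref{a}-plus-closed-form presentation only if you also supply an independent induction establishing Theorem \ref{a} first.
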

(Index numbers in the above equation can be flexible according to how you define the Fibonacci sequence.)

In order to prove this by induction,  
let's begin with analyzing the base cases of variable $m$.

\begin{proof}
\textbf{Base case: $m=0$}

So, set $m=0$ and let's
show that $[2k+3] = \dfrac{F_{3\cdot 0 + 4} + k F_{3\cdot 0 + 3}}{F_{3\cdot 0 + 1} + k F_{3\cdot 0 }} = \dfrac{F_4 + k\cdot F_3}{F_1 + k\cdot F_0}$ and since
$F_0 = 0, F_1 = 1, F_3 = 2, F_4 =3$,
we get the desired formula:
$$
[2k+3] = \frac{3 + 2k}{1 + 0k} = 2k+3
$$

\textbf{Now, induction on variable $m$}

Goal: We need to show that $[4,4, ... , 4,4,2k+3] = \dfrac{F_{3m+7} + k\cdot F_{3m+6}}{F_{3m+4} + k\cdot F_{3m+1}}$ for all constant $k,m \geq 0 \in \mathbb{Z}$, $a_{m + 1} = 2k + 3$, $a_i = 4$ for all $0 \leq i \leq m$, assuming that the theorem above is reasonable and verifiable at $m$.

In other words, we assume that 
\begin{equation}\label{1}
[4,4, ... , 4,2k+3] = \frac{F_{3m+4} + k\cdot F_{3m+3}}{F_{3m+1} + k\cdot F_{3m}}, \ \textrm{with $m$ 4's}
\end{equation}
and we want to prove that 
\begin{equation}\label{2}
[4,4, 4, ... , 4,2k+3] = \frac{F_{3m+7} + k\cdot F_{3m+6}}{F_{3m+4} + k\cdot F_{3m+3}}, \ \textrm{with $(m+1)$ 4's}
\end{equation}

We take the reciprocal of both sides
of equation (\ref{1}) and add 4 and get:
$$
\textrm{LHS} = \frac{F_{3m+1} + 4F_{3m+4} + k\cdot F_{3m} + 4k\cdot F_{3m+3}}{F_{3m+4} + k\cdot F_{3m+3}}
$$

Comparing the numerator and denominator, we find out that the denominator is the same as the goal. This suffices to prove that $F_{3m+1} + 4F_{3m+4} + k\cdot F_{3m} + 4k\cdot F_{3m+3} = F_{3m+7} + k\cdot F_{3m+6}$.

Using Identity 18 from ``Proofs that Really Count" again, we know $F_{3m+1} + 4F_{3m+5}$ is equal to $F_{3m+7}$ and $F_{3m} + 4F_{3m+3}$ is equal to $F_{3m+6}$. By simply moving the indices left 1 unit and multiplying by $k$, we can get the answer. Thus, this concludes our proof and we
have proved equation (\ref{2}). 
\end{proof}

With curiosity, we extended the range from only non-negative integers to all integers. The chart below shows several values when $k$ is negative.

\begin{tabular}{c|cccccccccccc}
\multicolumn{10}{c}{\rule[-5mm]{0mm}{12mm} \bf The first few values for $G_n^{k}$ ($k<0$)}\\
$n$ & 0 &1&2&3&4&5&6&7 &8 &9 &10 &11  \\
\hline
$G_n^{(-4)}$ & -4 & 1 & -3 & -2 & -5 & -7 & -12 & -19 & -31 & -50 & -81 & -131 \\
\hline
$G_n^{(-3)}$ & -3 & 1 & -2 & -1 & -3 & -4 & -7 & -11 & -18 & -29 & -47 & -76 \\
\hline
$G_n^{(-2)}$ & -2 & 1 & -1 & 0 & -1 & -1 & -2 & -3 & -5 & -8 & -13 & -21 \\
\hline
$G_n^{(-1)}$ & -1 & 1 & 0 & 1 & 1 & 2 & 3 & 5 & 8 & 13 & 21 & 34 \\
\end{tabular}

One thing worth mentioning is that $G_n^{(-1)}$ and $G_n^{(-2)}$ are negative Fibonacci numbers, and $G_n^{(-3)}$ are negative Lucas numbers. The same ``Gibonomials" are constructed according to the definition that $G_n^{(k)} = f_n + k\cdot f_{n-1}$ ($k$ is negative in these cases).

\begin{corollary}
$F_n$ is the Fibonacci sequence defined with $F_0 = 0$, $F_1 = 1$, and $F_n = \textrm{sum of two previous terms}$ ($n \geq 2$). For all $k \in \mathbb{Z}$, $a_m = 2k+3$, $a_i = 4$ for all $0 \leq i < m$,
$$
[4,4, ... , 4,2k+3] = \frac{F_{3m+4} + k\cdot F_{3m+3}}{F_{3m+1} + k\cdot F_{3m}}.
$$
\end{corollary}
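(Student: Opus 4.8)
The plan is to leverage Theorem~\ref{b}, which already establishes the identity for every non-negative integer $k$, and to extend it to negative $k$ by a persistence-of-identities argument rather than by repeating the induction. The central observation is that, for a fixed value of $m$, both the numerator and the denominator appearing on each side are \emph{affine} (degree at most one) polynomials in the variable $k$, and an affine identity holding at infinitely many points must hold identically.

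To make this precise, I would write $[4,4,\ldots,4,2k+3] = p_m/q_m$ as in the introduction, where the convergents satisfy $p_i = a_i p_{i-1} + p_{i-2}$ and $q_i = a_i q_{i-1} + q_{i-2}$. Because every entry except the last equals $4$, the convergents $p_{m-1}, q_{m-1}, p_{m-2}, q_{m-2}$ are integers independent of $k$. Only the final step involves $k$, through $a_m = 2k+3$, so
$$
p_m = (2k+3)p_{m-1} + p_{m-2}, \qquad q_m = (2k+3)q_{m-1} + q_{m-2}
$$
are affine in $k$, and the right-hand side is manifestly affine in $k$ as well. Clearing denominators in the statement of Theorem~\ref{b} yields
$$
p_m\bigl(F_{3m+1} + k F_{3m}\bigr) = q_m\bigl(F_{3m+4} + k F_{3m+3}\bigr),
$$
an identity between two polynomials of degree at most $2$ in $k$. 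By Theorem~\ref{b} this holds for all $k \geq 0$, hence at infinitely many values of $k$; two polynomials agreeing at infinitely many points coincide, so the identity holds for every $k \in \mathbb{Z}$. Dividing back recovers the claimed formula for all integers $k$.

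The step that needs genuine care is well-definedness. For negative $k$ the entry $2k+3$ may be zero or negative, so the tiling interpretation is lost and the continued fraction must be read purely algebraically through the convergent recurrence; I must also ensure no denominator vanishes. For the right-hand denominator I would check that $F_{3m+1} + k F_{3m} \neq 0$ for every integer $k$: when $m = 0$ it equals $F_1 = 1$, and when $m \geq 1$ one has $F_{3m} \geq 2$ together with $\gcd(F_{3m+1}, F_{3m}) = 1$, so $F_{3m}$ cannot divide $F_{3m+1}$ and the sum is never zero. That $q_m \neq 0$ then follows from the cross-multiplied identity together with the standard fact that consecutive convergents are never simultaneously zero. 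I expect this bookkeeping to be the only real obstacle, since the algebraic core is already contained in Theorem~\ref{b}.

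Finally, it is worth recording an alternative that confirms the result: inspecting the inductive proof of Theorem~\ref{b}, the sign of $k$ is never used — the base case $[2k+3] = 2k+3$ is valid for any $k$, and the inductive step relies only on the convergent recurrence and Identity~18, both insensitive to the sign of $k$. Thus that induction already proves the corollary verbatim; the polynomial argument above simply repackages this observation in a way that makes the extension to $\mathbb{Z}$ immediate and transparent.
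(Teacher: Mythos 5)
Your proposal is correct, and it takes a genuinely different route from the paper. The paper does not actually prove this corollary: it spot-checks three instances with $m=2$ and $k=-4,-3,-2$ (two of which even contain arithmetic slips — $[4,4,-3] = 47/11$, not $47/18$, and $[4,4,-1] = 13/3$, not $13/5$) and then asserts that the formula ``still applies''; the only rigorous content implicit in the paper is exactly what you record in your closing paragraph, namely that the induction proving Theorem~\ref{b} never uses the sign of $k$. Your main argument is different and more structural: for fixed $m$, the cross-multiplied identity $p_m\left(F_{3m+1}+kF_{3m}\right) = q_m\left(F_{3m+4}+kF_{3m+3}\right)$ is an equality of polynomials of degree at most $2$ in $k$, already valid at the infinitely many points $k \geq 0$ by Theorem~\ref{b}, hence a polynomial identity, hence valid for every $k \in \mathbb{Z}$. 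What this buys is that the extension to negative $k$ costs no further Fibonacci manipulation at all, and it isolates the one genuine issue — non-vanishing of denominators — which you resolve correctly: $F_{3m+1}+kF_{3m} \neq 0$ because $F_{3m} \geq 2$ is coprime to $F_{3m+1}$ (the case $m=0$ being trivial), and $q_m \neq 0$ because $p_m q_{m-1} - p_{m-1} q_m = \pm 1$ forbids $p_m$ and $q_m$ from vanishing simultaneously. The one refinement you should add: since the paper reads $[4,\ldots,4,2k+3]$ as a nested fraction rather than through the convergent recurrence, identifying the nested expression with $p_m/q_m$ for negative $k$ also requires every tail $[4,\ldots,4,2k+3]$ with fewer 4's to be nonzero; this follows from the same coprimality argument applied at each smaller $m$, but as written you only flag this well-definedness question rather than fully discharge it.
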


Let's test a few values.

When $k = -4$ and $m = 2$, $[4,4,-5] = \dfrac{81}{19} = \dfrac{F_{10} - 4F_9}{F_7 - 4F_6} = \dfrac{G_{10}^{(-4)}}{G_7^{(-4)}}$ 

When $k = -3$ and $m = 2$, $[4,4,-3] = \dfrac{47}{18} = \dfrac{F_{10} - 3F_9}{F_7 - 3F_6} = \dfrac{G_{10}^{(-3)}}{G_7^{(-3)}}$ 

When $k = -2$ and $m = 2$, $[4,4,-1] = \dfrac{13}{5} = \dfrac{F_{10} - 2F_9}{F_7 - 2F_6} = \dfrac{G_{10}^{(-2)}}{G_7^{(-2)}}$

Even after we let $k$ equal to negative integers, those properties and formulas still apply.

After we tested and verified the properties above for the magical ``4'', we are motivated to try out other sets of values such as $[2,2, ... ,2,3]$, $[2,2, ... , 2,5]$ and so on, as well as $[3,3, ... , 3,3]$, $[3,3, ... ,3,5]$ and so on.

We will present our tests below.

For the number 2:
$$
[3] = \dfrac{3}{1}
$$
$$
[2,3] = 2+\dfrac{1}{3} = \dfrac{7}{3}
$$
$$
[2,2,3] = 2+\dfrac{1}{2+\dfrac{1}{3}} = \dfrac{17}{7}
$$

For the number 3:
$$
[3] = \dfrac{3}{1}
$$
$$
[3,3] = 3+\dfrac{1}{3} = \dfrac{10}{3}
$$
$$
[3,3,3] = 3+\dfrac{1}{3+\dfrac{1}{3}} = \dfrac{33}{10}
$$

After connecting them to existing sequences and creating some new ones according to the results given above, we cannot find any valid patterns that can be proved. They may apply in the first few values of numbers ``2" and ``3", but they can never be generalized like the number ``4".

Realizing that ``4" is one member of the Lucas numbers, we are suspicious that there might be another number that fits into a certain pattern.

Recall that in the introduction, we found an interesting identity about continued fraction: $[1,1, ... ,1] = \dfrac{F_{n+1}}{F_n}$. This leads us to wonder whether this pattern holds for more values like 3 and 5.

For the number 3:
$$
[3] = \dfrac{3}{1}
$$
$$
[1,3] = 1+\dfrac{1}{3} = \dfrac{4}{3}
$$
$$
[1,1,3] = 1+\dfrac{1}{1+\dfrac{1}{3}} = \dfrac{7}{4}
$$
(The numerators and denominators are in the Gibonacci sequence $G_n^2$)

For the number 5:
$$
[5] = \dfrac{5}{1}
$$
$$
[1,5] = 1+\dfrac{1}{5} = \dfrac{6}{5}
$$
$$
[1,1,5] = 1+\dfrac{1}{1+\dfrac{1}{5}} = \dfrac{11}{6}
$$
(The numerators and denominators are in the Gibonacci sequence $G_n^4$)

So far, readers may wonder whether $[1,1, ... ,1,2]$ and $[1,1, ... ,1,4]$ (even for the last number $a_n$, instead of odd) will have the same format of the final results. Let's test them patiently.

For the number 2:
$$
[2] = \dfrac{2}{1}
$$
$$
[1,2] = 1+\dfrac{1}{2} = \dfrac{3}{2}
$$
$$
[1,1,2] = 1+\dfrac{1}{1+\dfrac{1}{2}} = \dfrac{5}{3}
$$
(The numerators and denominators are indeed in the Gibonacci sequence $G_n^1$)

We discover that if the ending number is $k$, then the numerators and denominators are in the sequence $G_n^{(k-1)}$.

After generalization, we get:

For $k \geq 1 \in \mathbb{Z}$,
\begin{equation}\label{3}
[1,1, ... ,1,k] = \dfrac{G_{n+2}^{(k-1)}}{G_{n+1}^{(k-1)}},  
\end{equation}
where $a_m = k$, $a_i = 1$ for all $0 \leq i < m$.

Converting this into all-Fibonacci, we derive from equation (\ref{3}) our next theorem.

\begin{theorem}\label{c}
$F_n$ is the Fibonacci sequence defined with $F_0 = 0$, $F_1 = 1$, and $F_n = \textrm{sum of two previous terms}$. For all $k \geq 1 \in \mathbb{Z}$, $a_m = k$, $a_i = 1$ for all $0 \leq i < m$,
$$
[1,1, ... ,1,k] = \dfrac{F_{m+2}+(k-1)\cdot F_{m+1}}{F_{m+1}+(k-1)\cdot F_m}.
$$
\end{theorem}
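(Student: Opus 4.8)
The plan is to prove Theorem~\ref{c} by induction on $m$, reusing the reciprocate-and-add template that drove Identities~1 through 3 and Theorem~\ref{b}. The only structural difference is that every leading partial quotient here equals $1$ rather than $4$, so the inductive step should collapse under the bare Fibonacci recurrence instead of requiring Identity~18. (An alternative, even shorter route would be to invoke equation~(\ref{3}) together with the already-established conversion $G_n^{(k-1)} = F_n + (k-1)F_{n-1}$, but I prefer the self-contained induction below.)

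First I would settle the base case $m=0$: the left side is $[k] = k/1$, while substituting $m=0$ into the right side gives $\frac{F_2 + (k-1)F_1}{F_1 + (k-1)F_0} = \frac{1 + (k-1)}{1 + 0} = k$, using $F_0 = 0$ and $F_1 = F_2 = 1$. As a sanity check I would also verify $m=1$, where $[1,k] = (k+1)/k$ matches $\frac{F_3 + (k-1)F_2}{F_2 + (k-1)F_1} = \frac{2 + (k-1)}{1 + (k-1)}$.

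For the inductive step, assume the identity at level $m$. Taking reciprocals turns the right side into $\frac{F_{m+1} + (k-1)F_m}{F_{m+2} + (k-1)F_{m+1}}$; since the continued fraction with $(m+1)$ ones is obtained by prepending a single $1$, I then add $1$ to both sides. The left side becomes $[1,1,\ldots,1,k]$ with $(m+1)$ ones, and the right side becomes $\frac{F_{m+2} + (k-1)F_{m+1} + F_{m+1} + (k-1)F_m}{F_{m+2} + (k-1)F_{m+1}}$. The denominator already equals the claimed denominator $F_{m+2} + (k-1)F_{m+1}$ at level $m+1$, so it suffices to reconcile the numerators. Grouping the numerator as $(F_{m+2} + F_{m+1}) + (k-1)(F_{m+1} + F_m)$ and applying $F_{m+2} + F_{m+1} = F_{m+3}$ and $F_{m+1} + F_m = F_{m+2}$ yields exactly $F_{m+3} + (k-1)F_{m+2}$, the predicted numerator at $m+1$.

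I do not anticipate a genuine obstacle here. Where the $4$-based identities forced the heavier Identity~18 ($4f_n = f_{n+2} + f_n + f_{n-2}$), the $+1$ step dissolves into two applications of the defining recurrence, making this the most transparent of the theorems. The only thing to watch is index bookkeeping: keeping the four shifted terms $F_{m+3}, F_{m+2}, F_{m+1}, F_m$ aligned through the reciprocate-and-add manipulation, and confirming that the base-case indices bottom out correctly at $F_0 = 0$, $F_1 = 1$.
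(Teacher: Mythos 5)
Your proposal is correct and follows essentially the same argument as the paper: the same base case computation at $m=0$, and the same reciprocate-and-add inductive step in which the numerator $(F_{m+2}+F_{m+1})+(k-1)(F_{m+1}+F_m)$ collapses to $F_{m+3}+(k-1)F_{m+2}$ via the Fibonacci recurrence alone. The paper leaves this last regrouping as ``easy to see,'' so your explicit grouping merely spells out the identical step.
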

(Again, index numbers can vary according to the definition of Fibonacci numbers.)

Next, let's prove it by induction.

\begin{proof}
\textbf{Base Case: $m = 0$}

Set $m = 0$ and show that $[k] = \dfrac{F_2+(k-1)\cdot F_1}{F_1+(k-1)\cdot F_0}$. Since, according to our definition, $F_0 = 0$, $F_1 = 1$, $F_2 = 1$, so
$$
[k] = \dfrac{1+(k-1)}{1} = k.
$$

\textbf{Now, induction on variable $m$}

We assume that
\begin{equation}\label{4}
[1,1, ... ,1,k] = \dfrac{F_{m+2}+(k-1)\cdot F_{m+1}}{F_{m+1}+(k-1)\cdot F_m}, \  \textrm{with $m$ 1's.}
\end{equation}

We want to demonstrate that
\begin{equation}\label{5}
[1,1, ... ,1,1,k] = \dfrac{F_{m+3}+(k-1)\cdot F_{m+2}}{F_{m+2}+(k-1)\cdot F_{m+1}}
\end{equation}
for all constant $m \geq 0 \in \mathbb{Z}$, $k \geq 1 \in \mathbb{Z}$, $a_{m + 1} = k$, $a_i = 1$ for all $0 \leq i \leq m$.

We take the reciprocal of both sides of equation (\ref{4}) and add 1 to get:
$$
[1,1, ... ,1,1,k] = \dfrac{F_{m+1}+F_{m+2}+(k-1)(F_m+F_{m+1})}{F_{m+2}+(k-1)\cdot F_{m+1}}
$$

It's easy to see that the numerators and denominators are the same as equation (\ref{5}).

This concludes our proof.
\end{proof}

We want to extend the realm valid sets of numbers into negative integers, just like Theorem 1.

Let's bring out the big chart again.

\begin{tabular}{c|cccccccccccc}
\multicolumn{10}{c}{\rule[-5mm]{0mm}{12mm} \bf The first few values for $G_n^{k}$ ($k<0 \in \mathbb{Z}$)}\\
$n$ & 0 &1&2&3&4&5&6&7 &8 &9 &10 &11  \\
\hline
$G_n^{(-4)}$ & -4 & 1 & -3 & -2 & -5 & -7 & -12 & -19 & -31 & -50 & -81 & -131 \\
\hline
$G_n^{(-3)}$ & -3 & 1 & -2 & -1 & -3 & -4 & -7 & -11 & -18 & -29 & -47 & -76 \\
\hline
$G_n^{(-2)}$ & -2 & 1 & -1 & 0 & -1 & -1 & -2 & -3 & -5 & -8 & -13 & -21 \\
\hline
$G_n^{(-1)}$ & -1 & 1 & 0 & 1 & 1 & 2 & 3 & 5 & 8 & 13 & 21 & 34 \\
\end{tabular}

\begin{corollary}
$F_n$ is the Fibonacci sequence defined with $F_0 = 0$, $F_1 = 1$, and $F_n = \textrm{sum of two previous terms}$. For all $k \in \mathbb{Z}$ and $m \geq 0$, $a_m = k$, $a_i = 1$ for all $0 \leq i < m$,
$$
[1,1, ... ,1,k] = \dfrac{F_{m+2}+(k-1)\cdot F_{m+1}}{F_{m+1}+(k-1)\cdot F_m}.
$$
\end{corollary}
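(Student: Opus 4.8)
The plan is to notice that nothing in the proof of Theorem~\ref{c} ever used the hypothesis $k \geq 1$, and then to repackage that argument so that it reads verbatim for every integer $k$. In the base case we computed $[k] = \tfrac{F_2 + (k-1)F_1}{F_1 + (k-1)F_0} = k$, an identity valid for any integer (indeed any real) $k$ since $F_0 = 0$; and the inductive step was the purely formal manipulation of taking a reciprocal and adding $1$, combined with the single Fibonacci relation $F_{n+1} = F_n + F_{n-1}$. Neither operation is sensitive to the sign of $k$, so at the level of algebra there is genuinely nothing new to check.

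The one genuinely new point is that when $k \leq 0$ the symbol $[1,1,\ldots,1,k]$ is no longer a continued fraction in the classical sense, so I would first say what it means. The cleanest way to do this, and the way I would organize the whole argument so that all integers $k$ are handled at once, is to lift the claim from rational values to the numerator/denominator pair. Setting $(N_0, D_0) = (k, 1)$ and $(N_{m+1}, D_{m+1}) = (N_m + D_m,\; N_m)$ records exactly the bookkeeping produced by the ``reciprocal and add $1$'' step, and it involves no division at all. I would then prove by induction on $m$ the two polynomial identities $N_m = F_{m+2} + (k-1)F_{m+1}$ and $D_m = F_{m+1} + (k-1)F_m$. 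Both are immediate from the recursion: $N_{m+1} = N_m + D_m = (F_{m+2}+F_{m+1}) + (k-1)(F_{m+1}+F_m) = F_{m+3} + (k-1)F_{m+2}$ by the Fibonacci relation, while $D_{m+1} = N_m = F_{m+2} + (k-1)F_{m+1}$ is already the claimed formula with $m$ advanced to $m+1$. Since these are equalities of linear polynomials in $k$, they hold for every $k \in \mathbb{Z}$ simultaneously, and dividing them gives the stated fraction wherever $D_m \neq 0$.

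The main obstacle is therefore not the computation but the degenerate cases: for certain pairs $(k,m)$ the denominator $F_{m+1} + (k-1)F_m$ vanishes (for instance $k=0,\,m=1$, where $[1,0]$ is already meaningless, or $k=-1,\,m=2$), and there the rational-value identity is vacuous because both sides are undefined. This is exactly why working with the pair $(N_m, D_m)$ rather than the quotient makes the statement clean: the polynomial identities are unconditional, and the fractional form is simply their consequence on the locus $D_m \neq 0$. I would state the corollary with this understanding, so that it carries genuine content precisely when its left-hand side is defined.
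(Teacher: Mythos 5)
Your proposal is correct, and it is in fact more careful than the paper, which offers no proof of this corollary at all: after proving Theorem~\ref{c} for $k \geq 1$, the paper simply displays the table of $G_n^{(k)}$ for negative $k$ and states the corollary, implicitly relying on the observation you make explicit --- that the induction (base case $[k]=k$, then reciprocal-and-add-$1$ plus the Fibonacci recurrence) never uses the sign of $k$. Your reformulation in terms of the integer pair $(N_m, D_m)$ with $(N_{m+1}, D_{m+1}) = (N_m + D_m,\, N_m)$ buys something real: the polynomial identities $N_m = F_{m+2}+(k-1)F_{m+1}$ and $D_m = F_{m+1}+(k-1)F_m$ hold unconditionally in $k$, whereas the paper's fraction-level induction silently breaks down in exactly the degenerate cases the extended statement now contains (e.g.\ $[1,0]$ for $k=0$, $m=1$, or $[1,1,-1]$ for $k=-1$, $m=2$, where the continued fraction is undefined). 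One small imprecision: the condition ``$D_m \neq 0$'' is not quite the right hypothesis for recovering the fractional statement, since the left-hand side is defined only when \emph{every} intermediate denominator is nonzero ($D_j \neq 0$ for all $j \leq m$, equivalently $N_j \neq 0$ for all $j < m$); for $k=0$, $m=2$ one has $D_2 = 1 \neq 0$ and yet $[1,1,0]$ is undefined, so the two sides are not equivalent on that locus. Your closing sentence --- that the identity carries content precisely when the left-hand side is defined --- is the correct formulation, so this is a matter of phrasing rather than a gap in the argument.
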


In light of the fact that the numbers ``1", ``2", ``3", ``4" are all in the Lucas numbers sequence, we tested the number ``7", but it doesn't work. For the number ``11", however, when $a_n = 3$, some patterns appear.
$$
[3] = \dfrac{3}{1}
$$
$$
[11,3] = 11+\dfrac{1}{3} = \dfrac{34}{3}
$$
$$
[11,11,3] = 11+\dfrac{1}{11+\dfrac{1}{3}} = \dfrac{377}{34}
$$

The results turn out to be Fibonacci numbers.

\begin{theorem}\label{d}
$F_n$ is the Fibonacci sequence defined with $F_0 = 0$, $F_1 = 1$, and $F_n = \textrm{sum of two previous terms}$. For $m \geq 0 \in \mathbb{Z}$, $a_m = 3$, $a_i = 11$ for all $0 \leq i < m$,
\begin{equation}\label{6}
[11,11, ... ,11,3] = \dfrac{F_{5m+4}}{F_{5m-1}}.
\end{equation}
\end{theorem}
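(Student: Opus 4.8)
The plan is to mirror the inductive template used for Theorems 2 and 3: verify a base case, assume the formula at $m$, take reciprocals, add the leading entry, and reduce the inductive step to a single Fibonacci identity. The structural reason to expect this to succeed is that $11 = L_5$, the fifth Lucas number, exactly as $4 = L_3$ drove the earlier theorems; the jump of $5$ in the indices (numerator $5m+4$ over denominator $5m-1$) is the Lucas-index analogue of the jump of $3$ seen before.

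First I would dispatch the base case $m = 0$. Here $[3] = \frac{3}{1}$, while the right-hand side reads $\frac{F_4}{F_{-1}}$; extending the recurrence backward gives $F_{-1} = F_1 - F_0 = 1$, so $\frac{F_4}{F_{-1}} = \frac{3}{1}$ as required. Checking $m = 1, 2$ against the worked examples $\frac{34}{3}$ and $\frac{377}{34}$ is a useful sanity check but not logically necessary. For the inductive step I would assume
$$[11, 11, \ldots, 11, 3] = \frac{F_{5m+4}}{F_{5m-1}} \quad (\text{with } m \text{ elevens}),$$
take the reciprocal of both sides, and add $11$. This produces
$$[11, 11, \ldots, 11, 11, 3] = \frac{11\, F_{5m+4} + F_{5m-1}}{F_{5m+4}} \quad (\text{with } m+1 \text{ elevens}).$$
The target at $m+1$ is $\frac{F_{5m+9}}{F_{5m+4}}$, so the denominators already agree, and everything collapses to proving the single identity $11\, F_{5m+4} + F_{5m-1} = F_{5m+9}$.

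The main obstacle, such as it is, is establishing this identity. Unlike the earlier theorems, the book's Identity 18 ($4f_n = f_{n+2} + f_n + f_{n-2}$) does not transfer directly, because the relevant Lucas multiplier is now $L_5 = 11$ rather than $L_3 = 4$. The cleanest route I would take is the product identity $L_k F_n = F_{n+k} + (-1)^k F_{n-k}$; with $k = 5$ this reads $11\, F_n = F_{n+5} - F_{n-5}$, equivalently $F_{n+5} = 11\, F_n + F_{n-5}$, which is precisely the needed identity at $n = 5m+4$. If one prefers an elementary derivation in the spirit of the paper's earlier substitutions, I would instead expand both $F_{5m+9}$ and $F_{5m-1}$ in terms of the two consecutive anchors $F_{5m+4}$ and $F_{5m+3}$ by iterating the recurrence five steps in each direction. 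This yields $F_{5m-1} = 5 F_{5m+3} - 3 F_{5m+4}$ and $F_{5m+9} = 8 F_{5m+4} + 5 F_{5m+3}$, whence $11\, F_{5m+4} + F_{5m-1} = 8 F_{5m+4} + 5 F_{5m+3} = F_{5m+9}$. Either way the inductive step closes and the theorem follows.
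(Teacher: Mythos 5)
Your proposal is correct, and its induction skeleton (base case $m=0$ with $F_{-1}=1$; reciprocal-plus-$11$ step; reduction to the single identity $11F_{5m+4}+F_{5m-1}=F_{5m+9}$) matches the paper's proof exactly; where you genuinely diverge is in how that key identity is established. The paper stays inside the toolkit of the book's Identities 17 and 18: it writes $11F_{m+4}=2\cdot 4F_{m+4}+3F_{m+4}$, applies those identities repeatedly to reach $11F_{m+4}=F_m+F_{m+2}+F_{m+4}+F_{m+6}+F_{m+8}$, and then telescopes this sum against $F_{m-1}$ up to $F_{m+9}$. You instead invoke the Lucas--Fibonacci product identity $L_kF_n=F_{n+k}+(-1)^kF_{n-k}$ at $k=5$, or alternatively expand $F_{5m-1}$ and $F_{5m+9}$ over the consecutive pair $F_{5m+4},F_{5m+3}$ and compare coefficients; both routes are valid (your computations $F_{5m-1}=5F_{5m+3}-3F_{5m+4}$ and $F_{5m+9}=8F_{5m+4}+5F_{5m+3}$ check out, and they remain correct at $m=0$ with $F_{-1}=1$). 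The trade-off is this: the paper's longer decomposition yields the five-term expansion of $11F_{m+4}$, which it explicitly reuses later when handling the coefficient $29$ in Theorem 7, so the extra work is an investment; your product-identity route is a one-line argument that moreover generalizes instantly to every odd-index Lucas multiplier $L_{2k+1}$, so it would prove Theorems 4 through 7 and the paper's final corollary uniformly; and your anchor-expansion route is fully elementary and self-contained, requiring no identities from the book at all.
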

(Note: $F_{-1} = 1$ in the case $[3] = \dfrac{3}{1} = \dfrac{F_4}{F_{-1}}$)

\begin{proof}
Proof by induction:

\textbf{Base case of $m = 0$}

Set $m = 0$ and it's easy to show that $[3] = \dfrac{F_4}{F_{-1}}$.

\textbf{Now, induction on $m$}

We assume that equation (\ref{6}) is right and want to demonstrate that it is true for $(n+1)$, which is
\begin{equation}\label{7}
[11,11, ... ,11,11,3] = \dfrac{F_{5m+9}}{F_{5m+4}}.
\end{equation}

By taking the reciprocal of both sides of equation (\ref{6}) and adding 11, we only need to prove that $F_{5m+9} = F_{5m-1}+11F_{5m+4}$, which is equivalent to proving the equation
\begin{equation}\label{8}
F_{m+9} = F_{m-1}+11F_{m+4}.
\end{equation}

To prove this, let's decompose $11F_{m+4}$ first.

$11F_{m+4}$ can be divided into $2\cdot 4F_{m+4}+3F_{m+4}$. Using Identity 17 and 18 from the book, we get
$$
11F_{m+4} = 3F_{m+6}+2F_{m+4}+3F_{m+2}.
$$

Using Identity 17 on $3F_{m+6}$ and $3F_{m+2}$ again, we obtain $$
11F_{m+4} = F_{m+8}+4F_{m+4}+F_m.
$$

Using Identity 18 on $4F_{m+4}$, we get
\begin{equation}\label{9}
11F_{m+4} = F_m+F_{m+2}+F_{m+4}+F_{m+6}+F_{m+8}.
\end{equation}

Now, combine equation (\ref{9}) and $F_{m-1}$, we finally prove that
$$
11F_{m+4}+F_{m-1} = F_{m-1}+F_m+F_{m+2}+F_{m+4}+F_{m+6}+F_{m+8} = F_{m+9}.
$$

Since equation (\ref{8}) is proved, equation (\ref{7}) is proved, so our theorem is proved.
\end{proof}

Although the cases when $a_n = 3$ is fascinating, $a_n = 5$, $a_n = 7$, and $a_n = 9$ are not able to be generalized after testing. However, when $a_n = 11$, even more fascinating patterns start to show up.
$$
[11] = 11 = \dfrac{11}{1}
$$
$$
[11,11] = 11+\dfrac{1}{11} = \dfrac{122}{11}
$$
$$
[11,11,11] = 11+\dfrac{1}{11+\dfrac{1}{11}} = \dfrac{1353}{122}
$$

One might wonder that 122 and 1353 are not Lucas numbers or Fibonacci numbers, but once you take a closer look, you'll find out that $122 = 123-1$ and $1353 = 1364-11$. This time, 1, 123, 11, 1364 are all Lucas numbers. So it's easier for us to find a general formula.

For our theorem to be able to express, we defined a new Lucas sequence. Simply reversing the order of the first two numbers, we get the sequence $l_n$, defined with $l_0 = 1$, $l_1 = 2$, $l_2 = 3$, $l_3 = 4$, and $l_n = l_{n-1}+l_{n-2}$ for $n\geq 2$. (This sequence is actually the result of rounding $(\dfrac{1+\sqrt{5}}{2})^n$ to the nearest integer, which is also listed as \href{https://oeis.org/A169985}{A169985} 
in OEIS.) Readers might notice that $l_1+l_2 \neq l_3$. This is because we simply changed the position of the first 2 terms. In the original $L_n$ sequence, $L_0 = 2$, $L_1 = 1$, so $L_2 = 3$ and $L_3 = 4$. When $L_0$ and $L_1$ are exchanged, 3 and 4 remain unmoved, resulting in such a irregularity. Making such a tiny change is entirely for the purpose of expressing our theorem.

In the case $[11,11,11] = \dfrac{1353}{122}$, 1353 can be written into $123-1$, which is $l_{15}-l_5$, and 122 can be written into $123-1$, which is $l_{10}-l_0$. Consequently, we can state our new theorem.

\begin{theorem}\label{e}
Let $l_n$ be defined with $l_0 = 1$, $l_1 = 2$, $l_2 = 3$, $l_3 = 4$, and $l_n = l_{n-1}+l_{n-2}$ for $n\geq 2$ and let $a_i = 11$ for all $m \geq 0 \in \mathbb{Z}$ and $0 \leq i \leq m$, then
\begin{equation}\label{10}
[11,11, ... ,11,11] = \dfrac{l_{5m+5}-l_{5m-5}}{l_{5m}-l_{5m-10}}.
\end{equation}
\end{theorem}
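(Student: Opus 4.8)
The plan is to follow the template of Theorems \ref{b} and \ref{d}: reduce the claim to a single numerator identity and run an induction on $m$ with the reciprocal-and-add-$11$ move. Writing $[11,\ldots,11]=p_m/q_m$ for the convergent with $m+1$ entries, the continued-fraction recurrence gives $p_m=11p_{m-1}+p_{m-2}$ and $q_m=11q_{m-1}+q_{m-2}$, and since $q$ obeys the same recursion as $p$ with its initial data shifted by one, $q_m=p_{m-1}$. Because the proposed denominator $l_{5m}-l_{5m-10}$ is exactly the proposed numerator with $m$ replaced by $m-1$, it suffices to establish the numerator identity $p_m=l_{5m+5}-l_{5m-5}$; the denominator then comes for free.

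For the inductive step I would assume $p_m/q_m=(l_{5m+5}-l_{5m-5})/(l_{5m}-l_{5m-10})$, take reciprocals, and add $11$. The new denominator is automatically $l_{5m+5}-l_{5m-5}$, matching the target at $m+1$, so the whole step collapses to the single equation
\[
l_{5m+10}-l_{5m}=11\bigl(l_{5m+5}-l_{5m-5}\bigr)+\bigl(l_{5m}-l_{5m-10}\bigr).
\]
The engine meant to kill this is the ``jump-by-five'' identity $l_{j+5}=11\,l_j+l_{j-5}$, valid for any solution of the Fibonacci/Lucas recurrence (it is the case $k=5$, $L_5=11$, of the standard relation $L_k g_n=g_{n+k}+(-1)^k g_{n-k}$, and can be proved verbatim as in Theorem \ref{d} by expanding $11\,l_j=l_{j-4}+l_{j-2}+l_j+l_{j+2}+l_{j+4}$ with Identities 17 and 18 and telescoping against $l_{j-5}$). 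Applying it at $j=5m+5$ and at $j=5m-5$ turns the right-hand side into $11\,l_{5m+5}$, which is also what the left-hand side becomes, so formally the step closes.

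The step I expect to be the genuine obstacle is not this algebra but the legitimacy of invoking the jump-by-five identity near the boundary, where $l_n$ is \emph{not} a true Lucas sequence: $l_0,l_1$ were swapped, so $l_n=L_n$ only for $n\ge 2$, and the formula at $m=0,1$ already reaches into $l_0,l_{-5},l_{-10}$, which no recurrence-consistent extension can supply (two zeros five apart would force $l\equiv 0$). More seriously, once every index exceeds $1$ the numerator collapses to $l_{5m+5}-l_{5m-5}=11L_{5m}$, itself a solution of $w_m=11w_{m-1}+w_{m-2}$ that need not coincide with $p_m$; the agreement at $m=2$ is an isolated crossing, while the agreement at $m=1$ is produced only by the irregular value $l_0=1$. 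Before committing to the induction I would therefore test the first case beyond the tabulated data, $m=3$: there $p_3=11p_2+p_1=15005$ but $l_{20}-l_{10}=L_{20}-L_{10}=15004$, so the proposed closed form is off by one. Thus the honest outcome of this plan is a \emph{counterexample}, not a proof: the statement holds only for $m\le 2$, and to repair it one must replace $l_{5m+5}-l_{5m-5}$ by the actual convergent numerator $\tfrac{11}{2}L_{5m}+\tfrac{123}{10}F_{5m}$. Reconciling the irregular boundary terms with a formula meant to be valid for all $m$ is precisely where the approach breaks down.
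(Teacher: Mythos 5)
Your conclusion is correct, and the obstacle you hit is not a defect of your approach: Theorem \ref{e} as stated is false for $m\geq 3$, so no proof exists, and the paper's own argument is fallacious. The paper runs exactly the induction you set up (reciprocal, then add $11$) and reduces the step to
$$
l_{5m+10}-l_{5m}=\bigl(l_{5m}+11\,l_{5m+5}\bigr)-\bigl(l_{5m-10}+11\,l_{5m-5}\bigr),
$$
which it then declares to be ``exactly true'' by transplanting equation (\ref{8}) from the proof of Theorem \ref{d}. That transplant is precisely the illegitimate move you flagged: $F_{j+5}=11F_j+F_{j-5}$ is valid for sequences obeying the Fibonacci recurrence at every index, and $l_n$ is not such a sequence --- $l_3\neq l_2+l_1$ because of the swapped initial terms, and no recurrence-consistent extension can have $l_{-5}=l_{-10}=0$ (your two-zeros observation). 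Concretely, $l_{j+5}=11\,l_j+l_{j-5}$ fails at $j=5$: $l_{10}=123$ while $11\,l_5+l_0=122$, exactly because $l_0=1\neq 2=L_0$. The irregularities happen to keep the step equation true for $m=0$ and $m=1$, which is why the paper's checked values and base cases all pass, but at $m=2$ the step equation reads $15004=15005$. Hence the induction, and the theorem, collapse at $m=3$: $[11,11,11,11]=15005/1353$ (consistent with Theorem \ref{f}, since $F_{25}/F_{20}=75025/6765=15005/1353$), while the right-hand side of (\ref{10}) yields $(l_{20}-l_{10})/(l_{15}-l_5)=15004/1353$. Your structural explanation is also right: for $m\geq 2$ the claimed numerator is the genuine Lucas quantity $L_{5m+5}-L_{5m-5}=11L_{5m}$, which satisfies the same three-term recurrence as the true numerators $p_m$ but coincides with them only at the isolated index $m=2$.

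Your supporting claims all check out: $q_m=p_{m-1}$ holds for a constant continued fraction, and your corrected numerator $\tfrac{11}{2}L_{5m}+\tfrac{123}{10}F_{5m}$ equals $F_{5m+10}/5$ (via $2F_{5m+10}=F_{10}L_{5m}+L_{10}F_{5m}$), in agreement with Theorem \ref{f}; that theorem survives because its induction invokes the jump-by-five identity only for the honest Fibonacci sequence. There is collateral damage as well: the formula $l_m-l_{m-10}=\tfrac{1}{5}F_{m+5}$ asserted after Theorem \ref{f} fails at $m=20$ (it too reads $15004=15005$) for the same reason. The statement under review can only be salvaged by restricting it to $0\leq m\leq 2$ or by replacing its right-hand side with the Fibonacci form of Theorem \ref{f}.
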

(Special note: In the cases $[11,11]$ and $[11]$, where $m = 1$ and $m = 0$, respectively, the results contain $l_{-5}$ and $l_{-10}$, so we set them to be 0 in order for our theorem to be applicable to all non-negative $m$.)

Next, let's prove it by induction.

\begin{proof}
\textbf{Base Case: $m = 0$}

Set $m = 0$ and we want to show that $[11] = \dfrac{l_5-l_{-5}}{l_0-l_{-10}}$.

As we've explained earlier, this equation is true.

\textbf{Now, induction on variable $m$}

As usual, we take the reciprocal of both sides of equation(\ref{10}) and add 11 and get
$$
[11,11,11, ... ,11,11] = \dfrac{(l_{5m}+11l_{5m+5})-(l_{5m-10}+11l_{5m-5})}{l_{5m+5}-l_{5m-5}}.
$$

So, we only need to show that $$
l_{5m+10}-l_{5m} = (l_{5m}+11l_{5m+5})-(l_{5m-10}+11l_{5m-5}).
$$

According to our proof of Theorem 3, we can clearly see that the above equation is exactly true by alternating the index numbers.

Thus, we have proved Theorem 4.
\end{proof}

The next thing we wonder is if we can render it into Fibonacci numbers, just like Theorem 1,2, and 3.

This is not obvious, however, since 1353 and 122 are not in the original Fibonacci sequence. If we multiply 1353 and 122 by 5, we get 6765 and 610, which are $F_{20}$ and $F_{15}$.

Further simplifying Theorem 4, we obtain Theorem 5.

\begin{theorem}\label{f}
Let $F_0 = 0$, $F_1 = 1$ and $F_n = F_{n-1}+F_{n-2}$. Let $a_i = 11$ for all $m \geq 0 \in \mathbb{Z}$ and $0 \leq i \leq m$, then
$$
[11,11, ... ,11,11] = \dfrac{F_{5m+10}}{F_{5m+5}}.
$$
\end{theorem}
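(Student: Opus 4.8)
The plan is to carry out a direct induction on $m$, in exactly the style of the earlier proofs, since the decisive algebraic fact has essentially already been proved. First I would record the base case $m=0$: here the continued fraction is $[11]=11/1$, and because $F_5=5$ and $F_{10}=55$, we indeed have $F_{5\cdot 0+10}/F_{5\cdot 0+5}=55/5=11$, so the formula holds.

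For the inductive step I would assume $[11,11,\ldots,11]=F_{5m+10}/F_{5m+5}$ (with $m+1$ elevens), take the reciprocal of both sides, and add $11$, just as in every previous proof. This produces
$$[11,11,\ldots,11,11]=11+\frac{F_{5m+5}}{F_{5m+10}}=\frac{11F_{5m+10}+F_{5m+5}}{F_{5m+10}},$$
now with $m+2$ elevens. The target denominator $F_{5m+10}$ already matches, so it suffices to verify the single Fibonacci identity
$$11F_{5m+10}+F_{5m+5}=F_{5m+15}.$$

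The key point, and the only place where any work could hide, is that this identity is not new: it is exactly equation (\ref{8}) with its index specialized. Since (\ref{8}) reads $F_{j+9}=F_{j-1}+11F_{j+4}$ for all indices $j$, substituting $j=5m+6$ gives $j+9=5m+15$, $j-1=5m+5$, and $j+4=5m+10$, which is precisely $F_{5m+15}=11F_{5m+10}+F_{5m+5}$. I would therefore simply cite that earlier computation, or equivalently re-derive it from equation (\ref{9}) in the form $11F_n=F_{n-4}+F_{n-2}+F_n+F_{n+2}+F_{n+4}$ and note that adding $F_{n-5}$ telescopes cleanly to $F_{n+5}$. This closes the induction, and I expect no genuine obstacle to arise: the entire difficulty was already dispatched when proving the $[11,11,\ldots,11,3]$ identity.

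As an alternative route, consistent with the ``multiply by $5$'' observation preceding the theorem, one could instead deduce this result from the $l_n$-version (\ref{10}) by establishing the two scaling relations $F_{5m+10}=5\,(l_{5m+5}-l_{5m-5})$ and $F_{5m+5}=5\,(l_{5m}-l_{5m-10})$; since scaling numerator and denominator by the same constant leaves the fraction unchanged, this converts (\ref{10}) directly into the all-Fibonacci form. I would favor the direct induction above, however, both because it reuses an identity already proved and because it avoids having to analyze the boundary conventions $l_{-5}=l_{-10}=0$ needed to make (\ref{10}) valid at small $m$.
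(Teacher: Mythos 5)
Your proposal is correct and follows essentially the same route as the paper's own (very terse) proof: induction with the reciprocal-plus-$11$ step, reducing everything to the identity $11F_{5m+10}+F_{5m+5}=F_{5m+15}$, which is equation (\ref{8}) already established in the proof of Theorem \ref{d}. In fact, your version is more careful than the paper's, since you make the index substitution in (\ref{8}) explicit rather than just asserting ``we already proved this one.''
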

(Note: The result should be $\dfrac{\dfrac{F_{5m+10}}{5}}{\dfrac{F_{5m+5}}{5}}$. After we multiply the numerator and denominator by 5, those 5's cancel out.)

Let's test a few values before we go on to prove this.

When $m = 0$, $[11] = 11 = \dfrac{55}{5} =  \dfrac{F_{10}}{F_5}$.

When $m = 1$, $[11,11] = \dfrac{122}{11} = \dfrac{610}{55} = \dfrac{F_{15}}{F_{10}}$.

When $m = 2$, $[11,11,11] = \dfrac{1353}{122} = \dfrac{6765}{610} = \dfrac{F_{20}}{F_{15}}$.

Then, let's prove this by induction.

\begin{proof}
\textbf{Base cases: $m = 0,1,2$}

True.

\textbf{Now, induction on $m$}

After we execute the same procedure, we only need to show that
$$
F_{5m+5}+11F_{5m+10} = F_{5m+15}.
$$

We already proved this one.
\end{proof}

Besides, by referring to Theorem 3 and Theorem 4, we can get a new formula connecting Fibonacci numbers and adjusted Lucas numbers:
$$
l_m-l_{m-10} = \dfrac{1}{5}F_{m+5}.
$$

Actually, the numerators and denominators of the results of Theorem 4, for small to large, can form a sequence: 0, 1, 11, 122, 1353, ... This sequence is listed as
\href{https://oeis.org/A049666}{A049666}  
in OEIS. All terms have a generalized formula to represent them, which is $\dfrac{F_{5n}}{5}$, where $n$ is a non-negative integer.

Also, $[11,11, ... ,11,8]$, where $a_m = 8$ and $a_i = 11$ for $0 \leq i < m$, and $[11,11, ... ,11,13]$, where $a_m = 13$ and $a_i = 11$ for $0 \leq i < m$ can also be expressed in the form $\dfrac{\textrm{Fibonacci}}{\textrm{Fibonacci}}$. We'll let the readers to determine the index numbers and find the hidden patterns.

We have finished our discussion for the number ``11". We have reason to believe that for the numbers ``1" and ``4", the same pattern exists.

When $n = 1$, $F_{5n} = F_5 = 5$, and the denominator is exactly 5. 

We hypothesize that for the number ``1", the generalized formula will be $\dfrac{F_{1n}}{1}$. When $n = 1$, $F_{1n} = F_1 = 1$, which corresponds to the denominator. $\dfrac{F_{1n}}{1}$ is essentially $F_n$, which matches to Theorem 2.

For the number ``4", according to the format, the results form a sequence with a general formula $\dfrac{F_{3n}}{2}$ (listed as
\href{https://oeis.org/A001076}{A001076} 
in OEIS). When $n = 1$, $F_{3n} = F_3 = 2$, which is the same as the denominator. Here comes our next theorem.

\begin{theorem}\label{g}
Let $a_n = \dfrac{F_{3n}}{2}$ for $n \geq 0$. Let $a_i = 4$ for all $m \geq 0 \in \mathbb{Z}$ and $0 \leq i \leq m$, then
$$
[4,4, ... ,4,4] = \dfrac{a_{m+2}}{a_{m+1}}.
$$
\end{theorem}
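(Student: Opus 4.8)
The plan is to prove this by induction on $m$, mirroring the reciprocal-and-add-$4$ technique already used for Identities 1--3 and Theorems 1--3. First I would untangle the notation, since the symbol $a$ is overloaded: read $a_n = F_{3n}/2$ as a fixed auxiliary sequence (so $a_0 = 0$, $a_1 = 1$, $a_2 = 4$, $a_3 = 17,\dots$), while the continued-fraction entries are all $4$. The claim at level $m$ then asserts that $[4,4,\dots,4]$ with $(m+1)$ fours equals $a_{m+2}/a_{m+1}$, equivalently $F_{3m+6}/F_{3m+3}$.

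For the base case $m=0$ I would check directly that $[4] = 4 = a_2/a_1 = (F_6/2)/(F_3/2)$. For the inductive step, assume $[4,\dots,4]$ with $(m+1)$ fours equals $a_{m+2}/a_{m+1}$. Taking the reciprocal gives $a_{m+1}/a_{m+2}$, and adding $4$ yields
$$[4,\dots,4] \;=\; \frac{4a_{m+2} + a_{m+1}}{a_{m+2}}$$
now with $(m+2)$ fours. Since the denominator already matches $a_{m+2}$, the entire step collapses to the single identity $a_{m+3} = 4a_{m+2} + a_{m+1}$.

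That recurrence is the real content, and it is the same Fibonacci fact already exploited for Identity 1. Clearing the factor of $2$, $a_{m+3} = 4a_{m+2} + a_{m+1}$ is equivalent to $F_{3m+9} = 4F_{3m+6} + F_{3m+3}$, which follows from Identity 18 (i.e. $4F_n = F_{n+2} + F_n + F_{n-2}$) with $n = 3m+6$: expanding gives $4F_{3m+6} + F_{3m+3} = F_{3m+8} + F_{3m+6} + F_{3m+4} + F_{3m+3}$, and repeatedly applying the Fibonacci recurrence telescopes this to $F_{3m+9}$. I expect this to be the main (and essentially the only) obstacle, though it is a light one; it also exhibits that the auxiliary sequence obeys $a_n = 4a_{n-1} + a_{n-2}$, which is the structural reason the theorem holds.

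A slicker alternative worth recording: by the convergent recurrence from the introduction with every entry equal to $4$, the numerators and denominators satisfy $p_n = 4p_{n-1} + p_{n-2}$ and $q_n = 4q_{n-1} + q_{n-2}$. Because $a_n = F_{3n}/2$ satisfies the same recurrence and the initial data align ($q_0 = 1 = a_1$, $q_1 = 4 = a_2$, $p_0 = 4 = a_2$, $p_1 = 17 = a_3$), one gets $q_n = a_{n+1}$ and $p_n = a_{n+2}$ at once, hence $[4,\dots,4] = p_n/q_n = a_{n+2}/a_{n+1}$. Either route closes the proof.
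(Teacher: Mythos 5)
Your proposal is correct and follows essentially the same route as the paper's own proof: induction with the reciprocal-and-add-$4$ step, reducing everything to the recurrence $a_{m+3} = 4a_{m+2} + a_{m+1}$, i.e.\ $F_{3m+9} = 4F_{3m+6} + F_{3m+3}$, which you verify via Identity 18 and telescoping the Fibonacci recurrence exactly as the paper does (up to an index shift). Your closing alternative via the convergent recurrences $p_n = 4p_{n-1}+p_{n-2}$, $q_n = 4q_{n-1}+q_{n-2}$ is a clean bonus, though it rests on the same key identity.
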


\begin{proof}
Proof by induction:

\textbf{Base case: $m = 0$}

Set $m = 0$ and show that $[4] = \dfrac{a_2}{a_1}$. Since $a_2 = \dfrac{F_6}{2} = 4$ and $a_1 = \dfrac{F_3}{2} = 1$, $[4] = \dfrac{4}{1} = 4$.

\textbf{Induction on variable $m$}

Suffice to show $a_{m+1}+4a_{m+2} = a_{m+3}$.

Converting this into its original form and adjusting its index numbers as needed, we want to show that
$$
F_{3m}+4F_{3m+3} = F_{3m+6}
$$

Using Identity 18, we obtain that
$$
F_{3m}+F_{3m+1}+F_{3m+3}+F_{3m+5} = F_{3m+6},
$$
which is true.
\end{proof}

Now it's time for us to further explore the results when every number, or element, in the continued fraction is the same. Back to the original Lucas numbers where the sequence starts with 2, 1 and so on. From our discussion above, we know that ``1", ``4", and ``11" possess patterns, so we hypothesize that ``29" also has some kind of pattern, skipping ``18".

Before verifying, let's first guess the potential results. From $\dfrac{F_{1n}}{1}$, $\dfrac{F_{3n}}{2}$, and $\dfrac{F_{5n}}{5}$ for the number ``1", ``4", and ``11", respectively, we suppose the generalized formula $\dfrac{F_{7n}}{13}$ will fit for the number ``29".

For example,
$$
[29] = \dfrac{29}{1} = \dfrac{\dfrac{377}{13}}{\dfrac{13}{13}} = \dfrac{\dfrac{F_{14}}{13}}{\dfrac{F_7}{13}}
$$
and
$$
[29,29] = \dfrac{842}{29} = \dfrac{\dfrac{10946}{13}}{\dfrac{377}{13}} = \dfrac{\dfrac{F_{21}}{13}}{\dfrac{F_{14}}{13}}.
$$

\begin{theorem}\label{h}
Let $a_n = \dfrac{F_{7n}}{13}$ for $n \geq 0$. Let $a_i = 29$ for all $m \geq 0 \in \mathbb{Z}$ and $0 \leq i \leq m$, then
$$
[29,29, ... ,29,29] = \dfrac{a_{m+2}}{a_{m+1}}.
$$
\end{theorem}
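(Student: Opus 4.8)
The plan is to prove this exactly as Theorem \ref{g} (the number ``4'' case) was handled, since the two statements share an identical shape: the continued fraction of repeated entries is the ratio of two consecutive terms of an auxiliary sequence $a_n$ whose recurrence is inherited from a Fibonacci subsequence. First I would dispose of the base case $m=0$: since $a_2 = F_{14}/13 = 377/13 = 29$ and $a_1 = F_7/13 = 13/13 = 1$, we have $[29] = 29 = a_2/a_1$, as required. Then I would set up the induction on $m$ in the usual way, assuming the formula holds for $(m+1)$ entries and aiming to extend it to $(m+2)$ entries.

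For the inductive step, I would take the reciprocal of both sides of the assumed identity and add $29$, obtaining
$$
[29,29,\ldots,29,29] = \frac{29\,a_{m+2} + a_{m+1}}{a_{m+2}}.
$$
Since this must match $a_{m+3}/a_{m+2}$ (the right-hand side for $(m+2)$ entries) and the denominators already agree, the entire theorem collapses to the single recurrence
$$
a_{m+1} + 29\,a_{m+2} = a_{m+3}.
$$
Substituting $a_n = F_{7n}/13$ and clearing the common factor of $13$, this is equivalent to the Fibonacci identity
$$
F_{7m+7} + 29\,F_{7m+14} = F_{7m+21}.
$$

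The crux of the argument, and the step I expect to be the main obstacle, is establishing this Fibonacci identity in the elementary telescoping style of the paper. Writing $n = 7m+14$, the claim reduces to decomposing $29\,F_n$ as the symmetric sum
$$
29\,F_n = F_{n-6}+F_{n-4}+F_{n-2}+F_n+F_{n+2}+F_{n+4}+F_{n+6},
$$
which is $L_7 F_n = 29\,F_n$ expressed through repeated application of Identities 17 and 18 (the pairings $F_{n-6}+F_{n+6}=18F_n$, $F_{n-4}+F_{n+4}=7F_n$, $F_{n-2}+F_{n+2}=3F_n$, together with the leftover $F_n$, summing to $29F_n$). Adding the extra term $F_{7m+7}=F_{n-7}$ then lets the sum telescope: $F_{n-7}+F_{n-6}=F_{n-5}$, $F_{n-5}+F_{n-4}=F_{n-3}$, and so on up the chain until the final collapse yields $F_{n+7}=F_{7m+21}$. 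Because the index step is now $7$ rather than the $3$ of the ``4'' case or the $5$ of the ``11'' case, this decomposition involves more terms, but the mechanism is identical to equation (\ref{9}) in the proof of Theorem \ref{d}; no new idea beyond careful bookkeeping is needed.
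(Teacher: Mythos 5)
Your proposal is correct and takes essentially the same route as the paper: the same reciprocal-and-add-$29$ induction that collapses the theorem to $a_{m+1}+29a_{m+2}=a_{m+3}$, i.e.\ to $F_{n-7}+29F_n=F_{n+7}$, and the same symmetric decomposition $29F_n=F_{n-6}+F_{n-4}+F_{n-2}+F_n+F_{n+2}+F_{n+4}+F_{n+6}$ followed by telescoping with $F_{n-7}$, which is exactly the paper's equation (\ref{12}). The only cosmetic difference is that you organize that decomposition through the pairings $F_{n+k}+F_{n-k}=L_kF_n$ for $k=2,4,6$ (giving $18+7+3+1=29$), whereas the paper reaches the identical formula by step-by-step coefficient manipulation with Identities 17 and 18; both amount to the same repeated use of those identities.
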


\begin{proof}
Proof by induction:

\textbf{Base case: $m = 0,1$}

As tested above, all the base cases are true.

\textbf{Induction on variable $m$}

Suffice to show that $a_{m+1}+29a_{m+2} = a_{m+3}$. 

Converting the sequence $a_n$ into $F_n$, we only need to prove the equation
\begin{equation}\label{11}
F_m+29F_{m+7} = F_{m+14}.
\end{equation}

The first step is to decompose $29F_{m+7}$. From equation (\ref{8}), we get
$$
22F_{m+7} = 6F_{m+9}+4F_{m+7}+6F_{m+5}
$$

Then, we combine $4F_{m+7}$ and the remaining $7F_{m+7}$ and use equation (\ref{8}) again and get
$$
29F_{m+7} = 9F_{m+9}+2F_{m+7}+9F_{m+5}.
$$

The next step is to use Identity 17 from the book on $9F_{m+9}$ and $9F_{m+5}$, we obtain
$$
9F_{m+9} = 3F_{m+11}+3F_{m+7}
$$
and
$$
9F_{m+5} = 3F_{m+7}+3F_{m+3}.
$$

Next, we need to combine all the $F_{m+7}$ like terms and using Identity 18, we get
$$
8F_{m+7} = 2F_{m+9}+2F_{m+7}+2F_{m+5}.
$$

Then we use Identity 17 on $3F_{m+11}$ and $3F_{m+3}$ and combine all the like terms. For all the terms with coefficient 3, we use Identity 17. Similarly, for all the terms whose coefficients are 4, we use Identity 18. Finally, we get our desired formula:
\begin{equation}\label{12}
\begin{aligned}
&\ \ \ \ F_m+29F_{m+7} \\
&= F_m+F_{m+1}+F_{m+3}+F_{m+5}+F_{m+7}+F_{m+9}+F_{m+11}+F_{m+13} \\ 
&= F_{m+14}
\end{aligned}    
\end{equation}

Equation (\ref{12}) clearly shows that equation (\ref{11}) is true.

Thus, proof for Theorem 7 is finished.
\end{proof}

For Theorem 5, 6, 7, they all have something in common. First, the elements in those continued fractions are the same. Second, the results are render in the form $\dfrac{\textrm{Fibonacci}}{\textrm{Fibonacci}}$. However, those ``Fibonacci" numbers aren't directly extracted from the original sequence $F_n$. Instead, they have a hidden pattern: $\dfrac{F_{1n}}{1}$, $\dfrac{F_{3n}}{2}$, $\dfrac{F_{5n}}{5}$,
$\dfrac{F_{7n}}{13}$,
$\dfrac{F_{9n}}{34}$ and so on.

For the results to be valid, the numbers in the continued fraction must be inside Lucas numbers $L_n$, but not all of them satisfy. The truth is that there's one satisfying number every two Lucas numbers, as seen in the cases ``1", ``4", ``11, ``29". This implies that the next number that will work is 76.

Based on these findings, we have another corollary, which is the most generalized one.

\begin{corollary}
Let $L_n$ be the sequence that starts with $L_0 = 2$, $L_1 = 1$, and $L_n = L_{n-1}+L_{n-2} \ (n \geq 2)$. Let the sequence $a_n = \dfrac{F_{(2k+1)n}}{F_{2k+1}}$, where $k \geq 0 \in \mathbb{Z}$. Let $a_i = L_{2k+1}$ for all $m \geq 0 \in \mathbb{Z}$ and $0 \leq i \leq m$, then
$$
[L_{2k+1},L_{2k+1}, ... ,L_{2k+1},L_{2k+1}] = \dfrac{a_{m+2}}{a_{m+1}}.
$$
\end{corollary}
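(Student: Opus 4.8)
The plan is to mirror the inductive scheme that already succeeded in Theorems 5, 6, and 7, but now carried out once and for all with the index $j := 2k+1$ kept symbolic. Writing $a_n = F_{jn}/F_j$, the base case $m=0$ asks that $[L_j] = a_2/a_1$; here $a_1 = F_j/F_j = 1$ and $a_2 = F_{2j}/F_j$, so the base case reduces to the single identity $F_{2j} = F_j L_j$, the standard Fibonacci--Lucas doubling formula. For the inductive step I would assume the statement for a continued fraction of $m+1$ copies of $L_j$, take the reciprocal of $a_{m+2}/a_{m+1}$, add $L_j$, and collect terms to obtain $(L_j\,a_{m+2} + a_{m+1})/a_{m+2}$. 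Since the denominator is already $a_{m+2}$, everything comes down to proving the recurrence $a_{m+1} + L_j\,a_{m+2} = a_{m+3}$, and after clearing the common factor $F_j$ this becomes the purely Fibonacci statement
\[
F_{j(m+1)} + L_j\,F_{j(m+2)} = F_{j(m+3)}.
\]

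Setting $n = j(m+1)$, this is an instance of the key lemma
\[
F_n + L_j\,F_{n+j} = F_{n+2j},
\]
which I would isolate and prove for every odd $j$ and every $n$. The cleanest route uses the general multiplication identity $L_j\,F_{n+j} = F_{n+2j} + (-1)^{j} F_n$, provable directly from the Binet forms of $F$ and $L$ or by a short induction on $j$ using the two defining recurrences. Because $j = 2k+1$ is odd, $(-1)^j = -1$, so this collapses exactly to $L_j\,F_{n+j} = F_{n+2j} - F_n$, i.e. to the key lemma; the base-case identity $F_{2j} = F_j L_j$ is itself the $n=j$ specialization of the same master identity, so a single formula powers the whole argument. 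This is also the structural reason the corollary holds only for the odd-indexed Lucas numbers $L_1=1$, $L_3=4$, $L_5=11$, $L_7=29$, \dots: for even $j$ the sign is $+$ and one gets $F_n + L_j\,F_{n+j} = F_{n+2j} + 2F_n$, which fails to telescope, matching the paper's empirical finding that there is one satisfying number every two Lucas numbers.

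The main obstacle is proving the key lemma in full generality rather than for the isolated values $j=3,5,7$ handled earlier by hand. The ad hoc decompositions via Identities 17 and 18 grow longer as $j$ increases and do not obviously terminate in a uniform pattern, so I would not try to extend them; instead I would commit to the symbolic identity $L_j\,F_{n+j} = F_{n+2j} + (-1)^j F_n$ and supply its own short induction on $j$ (or cite its Binet-form derivation). Once that lemma is in hand, the base case, the telescoping of the continued-fraction recurrence, and the cancellation of $F_j$ are all routine, so the entire proof rests on this one generalized Fibonacci--Lucas identity together with the parity of $j$.
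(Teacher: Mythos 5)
Your proposal is correct, and it is worth stating plainly that it does more than the paper does: the paper never actually proves this corollary. It is asserted immediately after Theorem 7 ``based on these findings,'' supported only by the worked examples $k=4$ (the continued fraction $[76,76,\dots,76]$) and $k=5$ ($[199,199,\dots,199]$) and the remark that the same pattern holds for other $k$; the only genuine proofs in the paper are the case-by-case ones for $L_3=4$, $L_5=11$, $L_7=29$ (Theorems 5--7), each of which decomposes $4F_n$, $11F_n$, $29F_n$ by hand via Identities 17 and 18 of ``Proofs that Really Count,'' a method that visibly grows longer with each case and never becomes a uniform argument. Your route replaces those ad hoc decompositions with the single identity $L_j F_{n+j} = F_{n+2j} + (-1)^j F_n$ (immediate from Binet's formulas, since the product of the two roots is $-1$), whose odd-$j$ specialization $F_n + L_j F_{n+j} = F_{n+2j}$ is exactly the recurrence $a_{m+1} + L_j a_{m+2} = a_{m+3}$ needed in the inductive step once the common factor $F_j$ is cleared, and whose $n=j$ case $F_{2j} = F_j L_j$ disposes of the base case. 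The continued-fraction mechanism itself (take the reciprocal, add $L_j$) is the same one the paper uses throughout, so your induction is sound; and your sign analysis is a genuine bonus, since it explains, rather than merely observes, the paper's empirical finding that only every other Lucas number works: for even $j$ the identity yields $F_n + L_j F_{n+j} = F_{n+2j} + 2F_n$, so the telescoping fails. In short, your proof is correct, strictly more general than anything in the paper, and is the right way to turn the paper's corollary-by-extrapolation into a theorem.
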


If $k = 4$, then $[76,76, ... ,76,76] = \dfrac{a_{m+2}}{a_{m+1}}$, where $a_n = \dfrac{F_{9n}}{34}$ (listed as 
\href{https://oeis.org/A049669}{A049669}
in OEIS).

If $k = 5$, then $[199,199, ... ,199,199] = \dfrac{a_{m+2}}{a_{m+1}}$, where $a_n = \dfrac{F_{11n}}{89}$ (listed as 
\href{https://oeis.org/A305413}{A305413}
in OEIS).

The same patterns holds for other values of $k$.

There might be other patterns regarding the number ``29", ``76", or even larger. We'll leave this for the readers to explore and discover.

\section{Collection of New Theorems}

\indent For readers' convenience, we collect all the new theorems that have been stated and proved in this article.

\textbf{Theorem \ref{a}:}

For all $k\geq 0$ and $k\in \mathbb{Z}$, $G_0 = k$, $G_1 = 1$, and $G_n = G_{n-1} + G_{n-2}$. For all $m\geq 0 \in \mathbb{Z}$, $a_m = 2k+3$ and $a_i = 4$ for all $0\leq i < m$,
$$
[4,4, ... ,4,2k+3] = \frac{G_{3m+4}}{G_{3m+1}}.
$$

\textbf{Theorem \ref{b}:}

$F_n$ is the Fibonacci sequence defined with $F_0 = 0$, $F_1 = 1$, and $F_n = \textrm{sum of two previous terms}$. For all $k \geq 0 \in \mathbb{Z}$, $a_m = 2k+3$, $a_i = 4$ for all $0 \leq i < m$,
$$
[4,4, ... , 4,2k+3] = \frac{F_{3m+4} + k\cdot F_{3m+3}}{F_{3m+1} + k\cdot F_{3m}}.
$$

\textbf{Theorem \ref{c}:}

$F_n$ is the Fibonacci sequence defined with $F_0 = 0$, $F_1 = 1$, and $F_n = \textrm{sum of two previous terms}$. For all $k \geq 1 \in \mathbb{Z}$, $a_m = k$, $a_i = 1$ for all $0 \leq i < m$,
$$
[1,1, ... ,1,k] = \dfrac{F_{m+2}+(k-1)\cdot F_{m+1}}{F_{m+1}+(k-1)\cdot F_m}.
$$

\textbf{Theorem \ref{d}:}

$F_n$ is the Fibonacci sequence defined with $F_0 = 0$, $F_1 = 1$, and $F_n = \textrm{sum of two previous terms}$. For $m \geq 0 \in \mathbb{Z}$, $a_m = 3$, $a_i = 11$ for all $0 \leq i < m$,
$$
[11,11, ... ,11,3] = 
\dfrac{F_{5m+4}}{F_{5m-1}}.
$$

\textbf{Theorem \ref{e}:}

Let $l_n$ be defined with $l_0 = 1$, $l_1 = 2$, $l_2 = 3$, $l_3 = 4$, and $l_n = l_{n-1}+l_{n-2}$ for $n\geq 2$ and let $a_i = 11$ for all $m \geq 0 \in \mathbb{Z}$ and $0 \leq i \leq m$, then
$$
[11,11, ... ,11,11] = 
\dfrac{l_{5m+5}-l_{5m-5}}{l_{5m}-l_{5m-10}}.
$$

\textbf{Theorem \ref{f}:}

Let $F_0 = 0$, $F_1 = 1$ and $F_n = F_{n-1}+F_{n-2}$. Let $a_i = 11$ for all $m \geq 0 \in \mathbb{Z}$ and $0 \leq i \leq m$, then
$$
[11,11, ... ,11,11] = \dfrac{F_{5m+10}}{F_{5m+5}}.
$$

\textbf{Theorem \ref{g}:}

Let $a_n = \dfrac{F_{3n}}{2}$ for $n \geq 0$. Let $a_i = 4$ for all $m \geq 0 \in \mathbb{Z}$ and $0 \leq i \leq m$, then
$$
[4,4, ... ,4,4] = \dfrac{a_{m+2}}{a_{m+1}}.
$$

\textbf{Theorem \ref{h}:}

Let $a_n = \dfrac{F_{7n}}{13}$ for $n \geq 0$. Let $a_i = 29$ for all $m \geq 0 \in \mathbb{Z}$ and $0 \leq i \leq m$, then
$$
[29,29, ... ,29,29] = \dfrac{a_{m+2}}{a_{m+1}}.
$$

\section{Conclusion}

\indent In this article, we mainly discussed the close connections between finite continued fractions, $F_n$ and $L_n$. For some of the theorems, we need to adjust the index numbers for the sake of expression, such as $l_n$, but this move doesn't affect the whole picture. During the process of proving, we also discovered some new formulas which connect different sequence together, such as $l_m-l_{m-10} = \dfrac{1}{5}F_{m+5}$. In order to generalize our equation, we even created some new formulas such as $G_n^k$. For those expressions that have large coefficients, we patiently use some identities from the book ``Proofs that Really Count" \cite{BQ} to assist our proof. Still, even more fascinating and impressive identities await, but we cannot include all of them in this paper. After reading this article, one might wonder what if the elements inside the continued fraction are irrational numbers, such as $\sqrt{3}$ and $\sqrt{5}$, or complex numbers involving $i$. Maybe these fraction will have some sort of connection with Binet's formula, which expresses $F_n$ and $L_n$ with irrational numbers. Curious readers might dig into this topic and explore.

\end{document}